\newcommand\N{\mathbb N}
\newcommand\R{\mathbb R}
\newcommand\ph\varphi
\newcommand\ps\psi
\newcommand\ep\varepsilon
\newcommand\rh\varrho
\newcommand\al\alpha
\newcommand\be\beta
\newcommand\ga\gamma
\newcommand\om\omega
\newcommand\ta\tau
\renewcommand\th\vartheta
\newcommand\de\delta
\newcommand\ze\zeta
\newcommand\ch\chi
\newcommand\et\eta
\newcommand\io\iota
\newcommand\la\lambda
\newcommand\si\sigma
\newcommand\Ga\Gamma
\newcommand\De\Delta
\newcommand\Th\Theta
\newcommand\La\Lambda
\newcommand\Si\Sigma
\newcommand\Ph\Phi
\newcommand\Ps\Psi
\newcommand\Om\Omega
\newtheorem{theorem}{Theorem}
\newtheorem{lemma}[theorem]{Lemma}
\newtheorem{proposition}[theorem]{Proposition}
\newtheorem{corollary}[theorem]{Corollary}
\theoremstyle{definition}
\newtheorem{definition}[theorem]{Definition}
\theoremstyle{remark}
\newtheorem{remark}[theorem]{Remark}
\newcommand\x{{\bar X}}
\newcommand\rx{{\R[\x]}}
\newcommand\sos{{\sum\rx^2}}
\newcommand{\dist}{\text{dist}}
\newcommand{\diam}{\text{diam}}
\begin{document}
\title[Putinar's Positivstellensatz]
{On the complexity of Putinar's Positivstellensatz}
\author{Jiawang Nie}
\thanks{The first author is supported by National Science Foundation
DMS-0456960.}
\address{Department of Mathematics\\
         University of California\\
         Berkeley, CA 94720-3840}
\email{njw@math.berkeley.edu}
\author{Markus Schweighofer}
\thanks{The second author is supported by the DFG grant ``Barrieren''.}
\address{Fachbereich Mathematik und Statistik\\
         Universit\"at Konstanz\\
         78457 Konstanz\\
         Germany}
\email{Markus.Schweighofer@uni-konstanz.de}
\keywords{Positivstellensatz, complexity, positive polynomial,
sum of squares, quadratic module, moment problem,
optimization of polynomials}
\subjclass[2000]{Primary 11E25, 13J30; Secondary 14P10, 44A60, 68W40, 90C22}
\date{\today}

\begin{abstract}
Let $S=\{x\in\R^n\mid g_1(x)\ge 0,\dots,g_m(x)\ge 0\}$ be a basic closed
semialgebraic set defined by real polynomials $g_i$. Putinar's
Positivstellensatz says that, under a certain condition stronger than
compactness of $S$, every real polynomial $f$ positive on $S$ posesses a
representation $f=\sum_{i=0}^m\si_ig_i$ where $g_0:=1$ and each $\si_i$ is a
sum of squares of polynomials. Such a representation is a certificate for the
nonnegativity of $f$ on $S$. We give a bound on the degrees of
the terms $\si_ig_i$ in this representation which depends on the description
of $S$, the degree of $f$ and a measure of how close $f$ is to having a zero
on $S$. As a consequence, we get information about the convergence rate of
Lasserre's procedure for optimization of a polynomial subject to polynomial
constraints.
\end{abstract}

\maketitle

\section{Introduction}

Always write $\N:=\{0,1,2,\dots\}$ and $\R$ for the sets of nonnegative
integers and real numbers, respectively. Denote by $\rx$ the ring of
polynomials in $n\ge 1$
indeterminates $\x:=(X_1,\dots,X_n)$. We use suggestive notation like
$\rx^2:=\{p^2\mid p\in\rx\}$ for the set of squares and $\sum\rx^2$ for the
set of sums of squares of polynomials in $\rx$. A subset $M\subseteq\rx$ is
called
a \emph{quadratic module} if it contains $1$ and it is closed under
addition and under multiplication with squares, i.e.,
$$1\in M,\qquad M+M\subseteq M\qquad\text{and}\qquad\rx^2M\subseteq M.$$
A subset $T\subseteq\rx$ is called a \emph{preordering} if it
contains all squares in $\rx$ and it is closed under
addition and multiplication, i.e.,
$$\rx^2\subseteq T,\qquad T+T\subseteq T\qquad\text{and}\qquad
  TT\subseteq T.$$
In other words, the preorderings are exactly the multiplicatively
closed quadratic modules.

Throughout the article, we fix $m\in\N$ and a tuple $\bar g:=(g_1,\dots,g_m)$
of polynomials $g_i\in\rx$. It will be convenient to set $g_0:=1\in\rx$.
The quadratic module $M(\bar g)$ generated by
$\bar g$ (i.e., the smallest quadratic module containing each $g_i$) is
\begin{equation}\label{mm}
M(\bar g)=\sum_{i=0}^m\sum\rx^2g_i
:=\left\{\sum_{i=0}^m\si_ig_i\mid\si_i\in\sos\right\}.
\end{equation}
Using the notation
$${\bar g}^\de:=g_1^{\de_1}\dots g_m^{\de_m},$$
the preordering $T(\bar g)$ generated by $\bar g$ can be written as
\begin{equation}\label{tt}
T(\bar g)=\sum_{\de\in\{0,1\}^m}\sum\rx^2{\bar g}^\de:=
\left\{\sum_{\de\in\{0,1\}^m}\si_\de{\bar g}^\de\mid\si_\de\in\sos\right\},
\end{equation}
i.e., $T(\bar g)$ is the quadratic module generated by the $2^m$ products
of $g_i$. It is obvious that all polynomials lying in
$T(\bar g)\supseteq M(\bar g)$ are nonnegative on the set
$$S(\bar g):=\{x\in\R^n\mid g_1(x)\ge 0,\dots,g_m(x)\ge 0\}.$$
Sets of this form are important in semialgebraic geometry (see \cite{bcr})
and
are called \emph{basic closed semialgebraic sets}. In 1991,
Schm\"udgen \cite{smn} proved the following ``Positivstellensatz'' (a
commonly used German term explained by the analogy with Hilbert's
Nullstellensatz).

\begin{theorem}[Schm\"udgen]\label{schmuedgen}
Suppose the basic closed semialgebraic set $S(\bar g)$ is compact.
Then for every polynomial $f\in\rx$,
$$f>0\text{\ on\ }S(\bar g)\implies f\in T(\bar g).$$
\end{theorem}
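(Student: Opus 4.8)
The plan is to deduce the theorem from two ingredients: first, that compactness of $S(\bar g)$ forces the preordering $T(\bar g)$ to be \emph{archimedean}, meaning that for every $p\in\rx$ there is an integer $k$ with $k-p\in T(\bar g)$; and second, a general representation theorem saying that an archimedean preordering already contains every element that is strictly positive at all of its real points. Once the real points of $T(\bar g)$ are identified with $S(\bar g)$, combining the two gives the statement.

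For the first ingredient I would use that $S(\bar g)$, being compact, is bounded, and pick $N\in\N$ with $\sum_i x_i^2<N$ on $S(\bar g)$. Then the basic closed semialgebraic set described by $g_1,\dots,g_m$ and $\sum_iX_i^2-N$ is empty, so the abstract Positivstellensatz of Krivine and Stengle yields $a,b\in T(\bar g)$ with $-1=a+b\bigl(\sum_iX_i^2-N\bigr)$. From such an identity, using that $\sum_iX_i^2\in\sos\subseteq T(\bar g)$ and that $T(\bar g)$ is multiplicatively closed, one extracts a constant $N'$ with $N'-\sum_iX_i^2\in T(\bar g)$; completing squares of the form $(cX_j\pm 1)^2$ then bounds each indeterminate inside $T(\bar g)$, which is exactly the archimedean property. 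This is Wörmann's lemma, and I expect it to be the main obstacle: the remaining steps are standard, whereas converting plain boundedness of $S(\bar g)$ into membership statements in $T(\bar g)$ is the genuinely nontrivial point. It is also where the compact case of this Positivstellensatz really differs from the abstract one, and it fails for the smaller module $M(\bar g)$ --- which is why the companion theorem of Putinar requires a hypothesis strictly stronger than compactness.

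For the second ingredient I would invoke the representation theorem for archimedean preorderings (Kadison--Dubois, Becker--Schwartz; see also Jacobi): if $T$ is an archimedean preordering of a commutative unital $\R$-algebra $A$, and $a\in A$ satisfies $\varphi(a)>0$ for every $\R$-algebra homomorphism $\varphi\colon A\to\R$ with $\varphi(T)\subseteq\R_{\ge 0}$, then $a\in T$. Specialising to $A=\rx$, the $\R$-algebra homomorphisms are the point evaluations $\varphi_x$, $x\in\R^n$, and $\varphi_x(T(\bar g))\subseteq\R_{\ge 0}$ holds precisely when $g_1(x)\ge 0,\dots,g_m(x)\ge 0$, i.e.\ when $x\in S(\bar g)$. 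Hence, once $T(\bar g)$ is known to be archimedean, any $f\in\rx$ with $f>0$ on $S(\bar g)$ lies in $T(\bar g)$, which is the assertion of the theorem. I would treat the representation theorem as known; its proof is a Zorn's lemma argument in which an element outside an archimedean preordering is separated from it by an ordering whose canonical place is a real point witnessing non-positivity. As an alternative to the whole scheme, one can run Schmüdgen's original functional-analytic argument: starting from a linear functional $L$ on $\rx$ that is nonnegative on $T(\bar g)$, perform a Gelfand--Naimark--Segal construction, use the $g_i$ to see that the operators representing the indeterminates are bounded, commuting and self-adjoint with joint spectrum inside $S(\bar g)$, apply the spectral theorem to write $L$ as integration against a positive measure supported on $S(\bar g)$, and conclude by Hahn--Banach separation together with the solution of the moment problem on compact sets; here the boundedness of these operators is the analytic incarnation of the archimedean property, so the essential difficulty is unchanged.
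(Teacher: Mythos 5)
The paper does not prove Theorem \ref{schmuedgen} at all: it is quoted from Schm\"udgen's original article \cite{smn}, whose proof is the functional-analytic one (GNS construction, spectral theorem, Haviland/moment problem) that you sketch at the end as an alternative. Your main route is the standard second-generation algebraic proof --- Krivine--Stengle Positivstellensatz applied to the empty set $S(g_1,\dots,g_m,\|\x\|^2-N)$ to get $-1=a+b(\|\x\|^2-N)$ with $a,b\in T(\bar g)$, then W\"ormann's lemma to conclude that $T(\bar g)$ is archimedean, then the Kadison--Dubois/Becker--Schwartz representation theorem for archimedean preorderings --- which is essentially the proof in the book \cite{pd} that the paper cites, and it is correct as an outline. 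You are also right to single out W\"ormann's lemma as the genuine difficulty and to note that this is exactly where compactness fails to suffice for the quadratic module $M(\bar g)$; one small caution is that the ``extraction'' of $N'-\|\x\|^2\in T(\bar g)$ from the identity is not just multiplicativity and bookkeeping --- it uses that the auxiliary quadratic module $\sum\rx^2+\sum\rx^2\,(N-\|\x\|^2)$ is already archimedean, together with an induction --- but as a cited lemma this is unobjectionable. Worth noting in the context of this paper: both your algebraic argument (Zorn's lemma inside the representation theorem) and the functional-analytic one are non-effective, whereas the authors' own interest is in degree bounds; the proofs they actually build on are the partially constructive ones via P\'olya's theorem in \cite{sw1,sw3}, which is why Theorem \ref{schmuedgenbound} (the quantitative form of Theorem \ref{schmuedgen}) is available to them at all.
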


Under a certain extra property on $M(\bar g)$ which we will define now,
this theorem remains true with $T(\bar g)$ replaced by its subset
$M(\bar g)$. We introduce the notation
$$\|\x\|^2:=\sum_{i=1}^n X_i^2\in\rx.$$
\begin{definition}
A quadratic module $M\subseteq\rx$ is called \emph{archimedean}
if $$N-\|\x\|^2\in M\qquad\text{for some\ }N\in\N.$$
\end{definition}

Note that this definition applies also to preorderings since every
preordering is a quadratic module. As a corollary from Schm\"udgen's Theorem,
we get the following well-known characterization of archimedean quadratic
modules.

\begin{corollary}\label{ac}
For a quadratic module $M\subseteq\rx$, the following are equivalent.
\begin{enumerate}[(i)]
\item $M$ is archimedean.\label{ac1}
\item There is a polynomial $p\in M$ such that
$S(p)=\{p\ge 0\}\subseteq\R^n$ is compact.\label{ac2}
\item There is a tuple $\bar g$ of polynomials such that $S(\bar g)$ is compact
and $M$ contains the preordering $T(\bar g)$.\label{ac3}
\item For all $p\in\rx$, there is $N\in\N$ such that $N-p\in M$.\label{ac4}
\end{enumerate}
\end{corollary}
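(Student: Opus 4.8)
The plan is to prove the cycle of implications (i) $\Rightarrow$ (ii) $\Rightarrow$ (iii) $\Rightarrow$ (iv) $\Rightarrow$ (i), in which Schm\"udgen's Theorem enters only in the single step (iii) $\Rightarrow$ (iv); the other three implications are pure bookkeeping with the axioms of a quadratic module.

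First I would dispose of the easy steps. For (i) $\Rightarrow$ (ii), take the polynomial $p:=N-\|\x\|^2\in M$ provided by the archimedean hypothesis; its nonnegativity set $S(p)$ is the closed ball of radius $\sqrt N$, hence compact. For (ii) $\Rightarrow$ (iii), given $p\in M$ with $S(p)$ compact, use the one-element tuple $\bar g:=(p)$, so that $S(\bar g)=S(p)$ is compact; then $T(\bar g)=\sos+\sos p$, and this lies in $M$ because $1\in M$ together with $\rx^2M\subseteq M$ forces $\sos\subseteq M$, while $p\in M$ with the same closure property forces $\sos p\subseteq M$, and finally $M+M\subseteq M$. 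For (iv) $\Rightarrow$ (i), simply apply (iv) to the particular polynomial $\|\x\|^2$.

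The one implication with genuine content is (iii) $\Rightarrow$ (iv). Assume $S(\bar g)$ is compact and $T(\bar g)\subseteq M$, and let $p\in\rx$ be arbitrary. Since $p$ is continuous and $S(\bar g)$ is compact, $p$ is bounded above on $S(\bar g)$, so I can choose $N\in\N$ larger than $\sup_{S(\bar g)}p$; then $N-p>0$ on $S(\bar g)$. Schm\"udgen's Theorem now yields $N-p\in T(\bar g)\subseteq M$, which is precisely statement (iv). The only obstacle worth flagging is that this is exactly the point where the Positivstellensatz is indispensable; the accompanying subtleties --- rounding the real supremum up to an integer $N$, and the degenerate case $S(\bar g)=\emptyset$, in which $N-p>0$ on $S(\bar g)$ holds vacuously so that Schm\"udgen's conclusion still applies --- are harmless.
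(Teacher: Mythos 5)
Your proof is correct and follows exactly the same route as the paper, which proves the cycle (i)$\implies$(ii)$\implies$(iii)$\implies$(iv)$\implies$(i) and invokes Schm\"udgen's Theorem only for (iii)$\implies$(iv); the paper simply declares the other three implications trivial, whereas you spell them out. No issues.
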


\begin{proof}
Observe that (\ref{ac1})$\implies$(\ref{ac2})$\implies$(\ref{ac3})$\implies$
(\ref{ac4})$\implies$(\ref{ac1}). All of these implications are trivial
except (\ref{ac3})$\implies$(\ref{ac4}) which follows from Theorem
\ref{schmuedgen}.
\end{proof}

In particular, we see that $S(\bar g)$ is compact if and only if
$T(\bar g)$ is archimedean. Unfortunately, $S(\bar g)$ might be compact
without $M(\bar g)$ being archimedean (see \cite[Example 6.3.1]{pd}).
What has to be added to compactness of $S(\bar g)$ in order to ensure
that $M(\bar g)$ is archimedean has been extensively investigated by Jacobi
and Prestel \cite{jp,pd}. Now we can state the Positivstellensatz proved by
Putinar \cite{put} in 1993.

\begin{theorem}[Putinar]\label{putinar}
Suppose the quadratic module $M(\bar g)$ is archimedean. Then for
every $f\in\rx$,
$$f>0\text{\ on\ }S(\bar g)\implies f\in M(\bar g).$$
\end{theorem}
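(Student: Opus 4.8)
The plan is to run the classical moment-problem argument---construct a Hilbert space from a separating functional, apply the spectral theorem to produce a representing measure, and show that measure lives on $S(\bar g)$---using the archimedean hypothesis only to supply the boundedness the argument needs. Write $M:=M(\bar g)$ and $S:=S(\bar g)$, and suppose for contradiction that $f>0$ on $S$ but $f\notin M$. Since $M$ is archimedean, Corollary~\ref{ac} shows that for every $p\in\rx$ there is $N\in\N$ with $N-p\in M$; equivalently, $1$ lies in the algebraic interior of the convex cone $M$, i.e.\ $1$ is an order unit for the partial order on $\rx$ induced by $M$. A Hahn--Banach separation argument---separating the cone $M$ from the point $f$, using that $1$ is an interior point of $M$---then yields an $\R$-linear functional $L\colon\rx\to\R$ with $L\ge 0$ on $M$, with $L(1)=1$ (the normalization is possible because $1$ is an order unit), and with $L(f)\le 0$.

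From $L$ I would build a Hilbert space by the Gelfand--Naimark--Segal construction. Since $\sos\subseteq M$, the symmetric bilinear form $\langle p,q\rangle:=L(pq)$ is positive semidefinite, and its kernel $I:=\{p\in\rx\mid L(p^2)=0\}$ is an ideal of $\rx$: for $p\in I$ and $q\in\rx$, choosing $N$ with $N-q^2\in M$ and noting $p^2(N-q^2)\in\rx^2M\subseteq M$ gives $0\le L((pq)^2)\le N\,L(p^2)=0$. Hence $\rx/I$ carries an inner product; let $H$ be its completion. The same estimate applied with $q=X_i$ (and with $N_i-X_i^2=(N-\|\x\|^2)+\sum_{j\ne i}X_j^2\in M$) shows that multiplication by $X_i$ descends to a bounded symmetric operator on $\rx/I$, so its continuous extension $\widehat X_i$ to $H$ is a bounded self-adjoint operator, and the $\widehat X_i$ commute.

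The heart of the proof is the spectral theorem for the commuting tuple $(\widehat X_1,\dots,\widehat X_n)$ of bounded self-adjoint operators, which provides a projection-valued measure on $\R^n$ concentrated on the compact joint spectrum. Pairing it against the vector $\xi:=1+I$ gives a positive Borel measure $\mu$ of compact support with $L(p)=\int p\,d\mu$ for all $p\in\rx$; in particular $\mu(\R^n)=L(1)=1$. It remains to locate $\operatorname{supp}\mu$ inside $S$: for each $i$ and each $p\in\rx$ one has $p^2g_i\in M$, so $\int p^2g_i\,d\mu\ge 0$; since $\rx$ is uniformly dense in $C(\operatorname{supp}\mu)$ by the Stone--Weierstrass theorem, $\int\phi^2 g_i\,d\mu\ge 0$ for every continuous $\phi$, which forces $g_i\ge 0$ on $\operatorname{supp}\mu$. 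Intersecting over $i$ gives $\operatorname{supp}\mu\subseteq S$.

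Finally, $\operatorname{supp}\mu$ is nonempty (otherwise $\mu=0$, contradicting $\mu(\R^n)=1$), is compact, and is contained in $S$, on which $f$ is strictly positive; hence $\ep:=\min_{\operatorname{supp}\mu}f>0$ and $0\ge L(f)=\int f\,d\mu\ge\ep\,\mu(\R^n)=\ep>0$, a contradiction, so $f\in M(\bar g)$. I expect the main obstacle to be the middle of the argument: checking that the multiplication operators really extend to \emph{commuting bounded self-adjoint} operators on $H$ (this is precisely where the archimedean hypothesis is consumed) and then invoking the multivariate spectral theorem to manufacture the measure $\mu$; the separation step and the Stone--Weierstrass support argument are comparatively routine. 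One could instead try to deduce the theorem from Schm\"udgen's Theorem~\ref{schmuedgen} by rewriting the mixed products ${\bar g}^\de$ occurring in a Schm\"udgen representation as $\rx$-linear combinations of the $g_i$, but it is not clear that such a reduction can be carried out without reintroducing essentially the same analytic content.
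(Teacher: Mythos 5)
The paper does not actually prove this theorem: it is quoted from \cite{put} and used as a black box (notably to obtain the representation of the rounded-hypercube polynomial $p$ in the proof of Theorem \ref{putinarbound}). Your argument is correct and is essentially the classical functional-analytic proof the paper alludes to when it says the original proofs ``use functional analysis''--- Eidelheit separation of $f$ from the cone $M(\bar g)$ using that $1$ is an order unit of the archimedean module, the GNS construction, boundedness of the multiplication operators extracted from $N-\|\x\|^2\in M(\bar g)$, the joint spectral theorem, and localization of the representing measure on $S(\bar g)$ --- and each step you flag as delicate does go through as you describe.
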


Both the proofs of Schm\"udgen and Putinar use functional analysis and real
algebraic geometry. They do not give information how to construct
a representation of $f$ showing that $f$ lies in the preordering
(an expression like in (\ref{tt}) involving $2^m$ sums of squares) or the
quadratic module (a representation like in (\ref{mm}) with $m+1$ sums of
squares).

Based on an old theorem of P\'olya \cite{pol}, new proofs of both
Schm\"udgen's and Putinar's Positivstellensatz have been given in
\cite{sw1,sw3}
which are to some extent constructive. By carefully analyzing a tame version
of \cite{sw3} and using an effective version of P\'olya's theorem
\cite{pr}, upper bounds on the degrees of the sums of
squares appearing in Schm\"udgen's preordering representation have been
obtained in \cite{sw2}. The aim of this article is to prove bounds on
Putinar's quadratic module representation. They will depend on the same data
but will be worse than the ones known for Schm\"udgen's theorem.

Since it will appear in our bound, we will need a convenient measure
of the size of the coefficients of a polynomial. For $\al\in\N^n$, we
introduce the notation
$$|\al|:=\al_1+\dots+\al_n\qquad\text{and}\qquad\x^\al:=X_1^{\al_1}\dotsm
  X_n^{\al_n}$$
as well as the multinomial coefficient
$$\binom{|\al|}{\al}:=\frac{|\al|!}{\al_1!\dots\al_n!}.$$
For a polynomial $f=\sum_\al a_\al\x^\al\in\rx$ with coefficients
$a_\al\in\R$, we set
$$\|f\|:=\max_\al\frac{|a_\al|}{\binom{|\al|}{\al}}.$$
This defines a norm on the real vector space $\rx$ with convenient properties
illustrated by Proposition \ref{normprop} below. For any $k\in\R_{\ge 0}$, we
now define convex cones $T(\bar g,k)$ and $M(\bar g,k)$ in the
finite-dimensional vector space $\rx_{\le k}$ of polynomials of degree at
most $k$ (i.e., at most $\lfloor k\rfloor$) by setting
\begin{align*}
T(\bar g,k)&=\left\{\sum_{\de\in\{0,1\}^m}\si_\de{\bar g}^\de\mid
\si_\de\in\sos,\deg(\si_\de{\bar g}^\de)\le k\right\}&\subseteq T(\bar g)
\cap\rx_{\le k},\\
M(\bar g,k)&=\left\{\sum_{i=0}^m\si_\de{\bar g}^\de\mid
\si_\de\in\sos,\deg(\si_\de{\bar g}^\de)\le k\right\}
&\subseteq M(\bar g)\cap\rx_{\le k}
\end{align*}
We now recall the previously proved bound for Schm\"udgen's theorem.
\begin{theorem}[\cite{sw2}]\label{schmuedgenbound}
For all $\bar g$ defining a basic closed semialgebraic set $S(\bar g)$
which is non-empty and contained in the open hypercube $(-1,1)^n$,
there is some $c\ge 1$ (depending on $\bar g$) such that for
all $f\in\rx$ of degree $d$ with
$$f^\ast:=\min\{f(x)\mid x\in S(\bar g)\}>0,$$
we have
$$f\in T\left(\bar g,cd^2\left(1+\left(d^2n^d\frac{\|f\|}{f^\ast}
  \right)^c\right)\right).$$
\end{theorem}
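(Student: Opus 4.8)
The plan is to carry out the constructive proof of Schm\"udgen's Positivstellensatz from \cite{sw1,sw3}, keeping track throughout of degrees and coefficient sizes, with the effective P\'olya theorem of Powers and Reznick \cite{pr} as the analytic engine: if a form $F$ of degree $e$ is positive on the standard simplex, with minimum $\la>0$ on it and coefficient size $L$, then $F$ times the $N$-th power of the sum of the variables has only nonnegative coefficients once $N$ exceeds a bound of order $e^2L/\la$; since the sum of the variables restricts to a nonzero constant on the chart we will use, this \emph{is} a Positivstellensatz with an explicit degree bound. The entire task is to reduce ``$f>0$ on $S(\bar g)$'' to such a simplex situation in an effective way.

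First one records the data that depend on $\bar g$ alone: upper bounds for $\max_i\deg g_i$, for $\max_i\|g_i\|$ and for $\max_i\sup_{[-1,1]^n}|g_i|$; a number $\de>0$ with $S(\bar g)\subseteq[-1+\de,1-\de]^n$; a degree bound for one fixed representation of each $1+X_j$ and of each $1-X_j$ as an element of $T(\bar g)$ (a single instance of Schm\"udgen's theorem not involving $f$, legitimate because $1\pm X_j>0$ on $S(\bar g)$); and, most importantly, a quantitative {\L}ojasiewicz-type estimate of the form $\dist(x,S(\bar g))\le c_0\bigl(\max(0,-\min_ig_i(x))\bigr)^{1/c_0}$ for $x\in[-1,1]^n$, expressing how the constraint defect controls the distance to $S(\bar g)$. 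All of these depend on $\bar g$ only and are folded into the constant $c$.

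The core step is an effective version of the reduction in \cite{sw1}: from $f$ with $d=\deg f$ and $f^\ast>0$ one constructs, using only products ${\bar g}^\de$ and the box generators $1\pm X_j$ together with sum-of-squares multipliers of controlled degree, a polynomial $\tilde f$ that is strictly positive on the whole hypercube $[-1,1]^n$ and from whose $T(\bar g,k)$-representation one reads off one of $f$ of comparable degree. Uniform continuity of $f$ on $[-1,1]^n$ --- with modulus governed by $d$, $n$ and $\|f\|$ through the elementary estimates for $\|\cdot\|$ of Proposition~\ref{normprop} --- gives $f\ge f^\ast/2$ on the $\rh$-neighbourhood of $S(\bar g)$ for some $\rh$ bounded below by $f^\ast$ divided by a polynomial in $d$, $n^d$ and $\|f\|$; off that neighbourhood the {\L}ojasiewicz estimate yields $\min_ig_i\le-\et$ with $\et$ of order $\rh^{c_0}$, and this quantitative separation of $S(\bar g)$ from the rest of the box is what allows one to build $\tilde f$, the number of factors of the $g_i$ that must be invested being essentially $\log(\|f\|/f^\ast)$ over the logarithm of comparable quantities --- which is exactly what produces the outer exponent $c$. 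One then checks that $\deg\tilde f$ and $\|\tilde f\|/\min_{[-1,1]^n}\tilde f$ are bounded by a polynomial in $d$, $n^d$, $\|f\|/f^\ast$ and the constants recorded above.

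It remains to apply effective P\'olya to $\tilde f$ on the box: the $2n$ polynomials $\frac{1+X_j}{2}$ and $\frac{1-X_j}{2}$ are nonnegative on $[-1,1]^n$ and their sum is the constant $n$, so writing $\tilde f$ in these coordinates and homogenizing turns ``$\tilde f>0$ on $[-1,1]^n$'' into positivity of a form on a standard simplex, and \cite{pr} represents $\tilde f$ as a nonnegative scalar combination of products of the $\frac{1\pm X_j}{2}$ in degree of order $\deg\tilde f+(\deg\tilde f)^2\,\|\tilde f\|/\min_{[-1,1]^n}\tilde f$, hence $\tilde f\in T(1+X_1,1-X_1,\dots,1+X_n,1-X_n)$ in that degree. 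Substituting the fixed bounded-degree $T(\bar g)$-representations of the $1\pm X_j$ and passing back from $\tilde f$ to $f$ puts $f$ into $T(\bar g,k)$ with $k$ bounded, after collecting exponents, by an expression of the shape $cd^2\bigl(1+(d^2n^d\|f\|/f^\ast)^c\bigr)$. The main obstacle lives entirely in the core step: proving the quantitative {\L}ojasiewicz-type inequality for $\bar g$ with an explicit (necessarily far from optimal) exponent, and then turning the qualitative compactness argument of \cite{sw1,sw3} into an honest construction of $\tilde f$ with polynomially controlled degree and coefficient-to-minimum ratio. The remaining steps are bookkeeping, but it is this bookkeeping that pins down the precise shape of the bound.
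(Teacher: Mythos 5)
This theorem is imported here from \cite{sw2} without proof, so there is no in-paper argument to compare against; your sketch should be measured against the strategy of \cite{sw2}, which the introduction summarizes as ``analyzing a tame version of \cite{sw3} and using an effective version of P\'olya's theorem \cite{pr}.'' Your outline matches that strategy in all essentials --- the quantitative \L ojasiewicz inequality driving a lifting step that makes the polynomial positive on the whole box, the effective P\'olya bound applied via the $2n$ generators $1\pm X_j$, and fixed $f$-independent preordering representations of those generators absorbed into $c$ (note only that the degree invested in the lifting step is polynomial, not logarithmic, in $\|f\|/f^\ast$, consistent with the exponent $c$ in the final bound) --- so I regard it as essentially the same approach, correctly reproduced.
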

In this article, we will prove the following bound for Putinar's theorem.
\begin{theorem}\label{putinarbound}
For all $\bar g$ defining an archimedean quadratic module $M(\bar g)$ and
a set $\emptyset\neq S(\bar g)\subseteq(-1,1)^n$,
there is some $c\in\R_{>0}$ (depending on $\bar g$) such that for
all $f\in\rx$ of degree $d$ with
$$f^\ast:=\min\{f(x)\mid x\in S(\bar g)\}>0,$$
we have
$$f\in M\left(\bar g,c\exp\left(
\left(d^2n^d\frac{\|f\|}{f^\ast}\right)^c\right)\right).$$
\end{theorem}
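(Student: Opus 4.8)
The strategy is to reduce Putinar's theorem to Schmüdgen's theorem (Theorem \ref{schmuedgenbound}), for which we already have an explicit degree bound, by means of an effective version of the passage from the preordering $T(\bar g)$ to the quadratic module $M(\bar g)$. The point is that archimedeanness of $M(\bar g)$ gives us, by Corollary \ref{ac}\,(\ref{ac4}), a uniform way of replacing any product ${\bar g}^\de$ appearing in a Schmüdgen representation of $f$ by an element of $M(\bar g)$, at the cost of increasing the degrees. Concretely, if $N-X_i^2\in M(\bar g)$ for a fixed $N$, then repeatedly applying this together with the identity $(\text{something nonnegative})\cdot g_ig_j = \tfrac14\big((g_i+g_j)^2 - (g_i-g_j)^2\big)\cdot(\cdots)$ and bounding $g_i$ from above on $S(\bar g)$ by a constant from $M(\bar g)$, one can show: there is a constant $L$ (depending only on $\bar g$) and, for each degree $k$ and each element $p\in T(\bar g,k)$, a representation exhibiting $p\in M(\bar g, L^k)$ or so — i.e. the degree blows up at worst \emph{exponentially} when one trades a preordering representation for a quadratic-module one.

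First I would make precise the ``denominator trick'': show that for each $i,j$ there are sums of squares $\tau$ and an element $q_{ij}\in M(\bar g)$ with $\tau g_i g_j = q_{ij} + (\text{sos})$, with explicit degree control, obtained by writing $g_ig_j$ as a difference of squares and using that some $N - g_k \in M(\bar g)$ and $N + g_k\in M(\bar g)$ for a fixed $N$ (which follows from Corollary \ref{ac}\,(\ref{ac4}) applied to $\pm g_k$). Iterating over the at most $m$ factors in ${\bar g}^\de$ turns a bound linear in the degree into one that is multiplied by a fixed constant at each of the $\le m$ steps, but crucially the \emph{number of iterations needed to clear all products in a degree-$k$ Schmüdgen certificate grows with $k$}, since the sums of squares $\si_\de$ themselves have degree up to $k$ and must be carried along. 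Tracking this carefully is where the single exponential $\exp((\cdots)^c)$ in Theorem \ref{putinarbound} comes from, replacing the polynomial bound of Theorem \ref{schmuedgenbound}.

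The key steps, in order: (1) Fix $\bar g$ with $M(\bar g)$ archimedean and $S(\bar g)\subseteq(-1,1)^n$ nonempty; apply Corollary \ref{ac} to obtain a constant $N\in\N$ with $N\pm g_i\in M(\bar g)$ for all $i$ and with $N-\|\x\|^2\in M(\bar g)$, and record that the witnessing representations have some fixed degree $D_0$. (2) Prove a ``degree-controlled inclusion lemma'': for every $k$, $T(\bar g,k)\subseteq M(\bar g, \eta(k))$ where $\eta(k)$ is at most singly exponential in $k$ (with base and additive constants depending on $\bar g$ through $N$ and $D_0$); the proof is an induction reducing the number of nontrivial products $g_ig_j$ in a given representative, each reduction step multiplying the ambient degree by a bounded factor. (3) Apply Theorem \ref{schmuedgenbound} to get $f\in T(\bar g, cd^2(1+(d^2n^d\|f\|/f^\ast)^c))$ with $c\ge 1$ depending on $\bar g$. (4) Compose: substitute $k = cd^2(1+(d^2n^d\|f\|/f^\ast)^c)$ into $\eta$; since $\eta$ is singly exponential in $k$ and $k$ is polynomial in $d^2n^d\|f\|/f^\ast$, the composite is at most $c'\exp((d^2n^d\|f\|/f^\ast)^{c'})$ for a suitable $c'$ depending on $\bar g$, absorbing polynomial factors into the exponent. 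Rename $c'$ to $c$ to match the statement.

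\textbf{Main obstacle.} The delicate part is step (2): making the reduction from $T(\bar g)$ to $M(\bar g)$ effective with only a \emph{single}-exponential degree blow-up rather than a tower. The naive recursion — clear one product $g_ig_j$, then recurse on the pieces, which themselves contain products — would iterate a number of times comparable to the degree and could a priori produce a doubly exponential bound; the care is in organizing the induction so that each ``level'' of clearing multiplies degrees by a constant factor and there are only $O(k)$ levels, or alternatively in clearing all products of a fixed pair simultaneously. One must also be careful that the sum-of-squares multipliers introduced by writing $g_ig_j$ as $\tfrac14((g_i+g_j)^2-(g_i-g_j)^2)$ and by the substitutions $g_k \rightsquigarrow N-(N-g_k)$ do not themselves spiral in degree; controlling $\|{\bar g}^\de\|$ and the degrees of $\si_\de$ from Schmüdgen's certificate (both available, since Theorem \ref{schmuedgenbound} bounds degrees and the $g_i$ are fixed) keeps everything in check. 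Once the inclusion lemma is in place with a clean singly-exponential $\eta$, the rest is bookkeeping on the order of magnitude of the composed bounds.
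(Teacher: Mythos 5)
Your step (2) is where the argument breaks down, and it is not a technical gap but a conceptual one. The claimed ``degree-controlled inclusion lemma'' $T(\bar g,k)\subseteq M(\bar g,\eta(k))$ would in particular put every product $g_ig_j$ (and more generally every ${\bar g}^\de$) into $M(\bar g)$. But archimedeanness of $M(\bar g)$ does not imply $T(\bar g)\subseteq M(\bar g)$: Putinar's theorem only covers polynomials \emph{strictly} positive on $S(\bar g)$, whereas $g_ig_j$ typically vanishes somewhere on $S(\bar g)$, and for $n\ge 2$ the quadratic module is in general a proper subset of the preordering even in the archimedean case (this is exactly why the paper's remark about $n=1$, where Scheiderer's result gives $M(\bar g)=T(\bar g)$, is singled out as special). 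Your proposed mechanism also fails on its own terms: writing $\si g_ig_j=\tfrac14\si(g_i+g_j)^2-\tfrac14\si(g_i-g_j)^2$ leaves the term $-\tfrac14\si(g_i-g_j)^2$, which is nonpositive on all of $\R^n$ and hence can never lie in $M(\bar g)$; the facts $N\pm g_k\in M(\bar g)$ give no way to absorb it, and you offer no other one. So the reduction from a Schm\"udgen certificate for $f$ itself to a Putinar certificate cannot be carried out this way, at any degree.

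The paper's route around this is genuinely different and worth internalizing. One first subtracts from $f$ the polynomial $\la\sum_{i=1}^m(g_i-1)^{2k}g_i$, which is visibly in $M(\bar g)$ with controlled degree; Lemma \ref{lifting} (via the \L ojasiewicz inequality) shows that for suitable $\la$ and $k$, polynomial in $\|f\|/f^*$, the difference $h$ is bounded below by $f^*/2$ on the whole cube $[-1,1]^n$. One then applies the Schm\"udgen bound to $h$ not over $S(\bar g)$ but over a rounded hypercube $S(p)$ cut out by a \emph{single} polynomial $p$ — for a single generator $T(p)=M(p)$, so no products ever arise — and finally composes with one fixed representation $p\in M(\bar g,d_1)$ supplied by a single non-effective application of Putinar's theorem to $p$ (legitimate, since $d_1$ depends only on $\bar g$). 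The exponential in the final bound then comes from the factor $n^{\deg h}$ in the Schm\"udgen bound, with $\deg h$ growing polynomially in $\|f\|/f^*$, not from iterating a product-clearing procedure. Your steps (1), (3), (4) are reasonable bookkeeping, but without a valid substitute for step (2) the proof does not go through.
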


In both theorems above, there have been made additional assumptions
compared to Schm\"udgen's and Putinar's original results. But these are not
very serious and have only been made to simplify the statements: For example,
if $S(\bar g)=\emptyset$, then $-1\in T(\bar g,k)$ for
some $k\in\N$ by Schm\"udgen's theorem. Therefore
$4f=(f+1)^2+(f-1)^2(-1)\in T(\bar g,2d+k)$ for each $f\in\rx$ of degree
$d\ge 0$. The other hypothesis that $S(\bar g)$ be contained in the open
hypercube $(-1,1)^n$ is only a matter of rescaling by a
linear (or affine linear) transformation on $\R^n$. For example, if $r>0$
is such that $S(\bar g)\subseteq(-r,r)^n$,
then Theorem \ref{schmuedgenbound} remains true with $\|f\|$ replaced by
$\|f(r\bar X)\|$. Here it is important to note that the property that
$M(\bar g)$ be archimedean is preserved under affine linear coordinate
changes. This is clear from Corollary \ref{ac}. Confer also the proof of
Proposition \ref{ease} below.

In both Theorem \ref{schmuedgenbound} and \ref{putinarbound}, the bound
depends on three parameters:
\begin{itemize}
\item The description $\bar g$ of the basic closed semialgebraic set,
\item the degree $d$ of $f$ and
\item a measure of how close $f$ comes to have a zero on $S(\bar g)$,
namely $\|f\|/f^*$.
\end{itemize}
The main difference between the two bounds is the exponential function
appearing in the degree bound for the quadratic module representation.
It is an open research problem whether this exponential function can be
avoided. It could even be possible that the same bound than for Schm\"udgen's
theorem holds also for Putinar's theorem. In view of the impact on the
convergence rate of Lasserre's optimization procedure (see Section
\ref{optimization} below), this question seems very interesting for
applications. Whereas the bound for the preordering representation cannot
be improved significantly (see \cite{ste}), this seems possible for the
quadratic module representation.

The dependance on the third parameter $\|f\|/f^*$ is consistent with the
fact that the condition $f^*>0$ cannot be weakened to $f^*\ge 0$ in neither
Schm\"udgen's nor Putinar's theorem. Under certain conditions (e.g., on the
derivatives of $f$), both theorems can however be extended to nonnegative
polynomials (see \cite{sch, mr2}). With the partially constructive approach
from \cite{sw4} to representation of nonnegative polynomials with zeros, one
might perhaps in the future gain bounds even for the case of nonnegative
polynomials which depend however on further data (for example the norm of
the Hessian at the zeros).

In special cases, Prestel had already proved the mere existence of a degree
bound for Putinar's Theorem depending on the three parameters described
above (see \cite[Section 8.4]{pd} and \cite{pre}). He used model theory and
valuation theory to get the existence of such a bound. But the only
information about the bound he gets (using G\"odel's theorem on the
completeness of first order logic) is that the bound is computable.

In contrast to this, our more constructive approach yields information in
what way the above bound depends on the two parameters $d$ and $\|f\|/f^*$.
The constant $c$ depends on the description
$\bar g$ of the semialgebraic set,
but no explicit formula is given.
For a concretely given $\bar g$, one could possibly determine a constant $c$
like in Theorems \ref{schmuedgenbound} and \ref{putinarbound} by a
very (probably too) tedious analysis of the proofs
(cf. \cite[Remark 10]{sw2}).

We conclude this introduction by considering the one variable case, i.e.,
$n=1$. Scheiderer showed in \cite[Corollary 3.4]{sch} that, in this case,
compactness of $S(\bar g)$ implies that $M(\bar g)=T(\bar g)$ (and therefore
$M(\bar g)$ is archimedean). Now the equality $M(\bar g)=T(\bar g)$ implies
in particular that ${\bar g}^\de\in M(\bar g)$ for all $\de\in\{0,1\}^m$.
As an easy consequence, we get that Theorem \ref{schmuedgenbound} remains
valid with $T$ replaced by $M$ in the case of univariate polynomials. The
bound in Theorem \ref{putinarbound} is thus far from being sharp in the
one variable case. As said above, in the multivariate case it is not known
if the bound can be improved considerably.

The rest of the paper is organized as follows. In the next section,
we use our result to investigate the accuracy of Lasserre's
``sums of squares
relaxations'' for optimization of polynomials. In Section \ref{proof}, we
give the proof of Theorem \ref{putinarbound}.

\section{Convergence rate of Lasserre's procedure}\label{optimization}

Consider the problem to compute (by a numerical procedure, i.e., up to
some prescribable error) the minimum
\begin{equation}\label{fstardef}
f^*:=\min\{f(x)\mid x\in S(\bar g)\}
\end{equation}
of a polynomial $f\in\rx$ on a non-empty basic closed semialgebraic set
$S(\bar g)$. In other words, you want to minimize
a polynomial under polynomial inequality constraints. When all the
polynomials involved are linear, i.e., of degree $\le 1$, this is
a linear optimization problem (a linear program) and there are very efficient
algorithms
to solve this problem. For general polynomials this problem gets very hard.
It is therefore a common approach to solve a much easier related problem, a so
called relaxation, namely to compute for $k\in\N$,
\begin{equation}\label{fkdef}
f_k^*:=\sup\{a\in\R\mid f-a\in M(\bar g,k)\}\in\R\cup\{-\infty\}
\end{equation}
which is clearly a lower bound of $f^*$. The problem of finding $f_k^*$ can
be written as a semidefinite program whose size gets bigger when $k$ grows
(see the references below).
Semidefinite programming is a well-known generalization of linear
programming for which very efficient algorithms exist (see for example
\cite{tod}). One can now solve
a sequence of larger and larger semidefinite programs in order to get
tighter and tighter lower bounds for $f^*$. Lasserre \cite{las} was the first
to interpret Putinar's theorem as a convergence result.

Indeed, it is easy to see that Putinar's theorem just says that the ascending
sequence $(f_k^*)_{k\in\N}$ converges to $f^*$ under the condition that
$M(\bar g)$ be archimedean.
In this section, we will interpret our bound for Putinar's Positivstellensatz
as a result about the speed of convergence of this sequence.

For an introduction to the interplay of semidefinite programming, sums of
squares, optimization of polynomials and results about positive polynomials,
we refer to \cite{las,mr1,sw1} (with special regard to Putinar's
Positivstellensatz) and \cite{jl,dnp,nds,ps}. There are several software
tools which translate the problem of computing $f_k^*$ into a semidefinite
program and call a semidefinite programming solver. See \cite{hl,kkw,löf,sos}.

The following technical lemma will also be needed in Section \ref{proof}.

\begin{lemma}\label{corest}
For any polynomial $f\in\rx$ of degree $d\ge 1$ and all $x\in[-1,1]^n$,
$$|f(x)|\le 2dn^d\|f\|.$$
\end{lemma}

\begin{proof}
Writing $f=\sum_\al a_\al\binom{|\al|}\al\x^\al$ ($a_\al\in\R$), we have
$\|f\|=\max_\al|a_\al|$ and
\begin{equation*}
|f(x)|=\left|\sum_\al a_\al\binom{|\al|}\al x_1^{\al_1}\dotsm x_n^{\al_n}
\right|
\le \sum_\al|a_\al|\binom{|\al|}\al|x_1|^{\al_1}\dotsm|x_n|^{\al_n}.
\end{equation*}
for all $x\in[-1,1]^n$.
Using that $|a_\al|\le\|f\|$ and $|x_i|\le 1$, the multinomial identity
now shows that $|f(x)|\le\|f\|\sum_{k=0}^dn^k\le(d+1)n^d\|f\|\le 2dn^d\|f\|$.
\end{proof}

Now we are ready to prove the main theorem of this section.

\begin{theorem}\label{howmuchadd}
For all polynomials $\bar g$ defining an archimedean quadratic module
$M(\bar g)$ and a set $\emptyset\neq S(\bar g)\subseteq (-1,1)^n$, there is
some $c>0$ (depending on $\bar g$) such that for all $f\in\rx$ of degree
$d$ with minimum $f^*$ on $S$ and for all integers
$k>c\exp((2d^2n^d)^c)$, we have
$$(f-f^*)+\frac{6d^3n^{2d}\|f\|}{\sqrt[c]{\log\frac kc}}\in M(\bar g,k)$$
and hence
$$0\le f^*-f_k^*\le \frac{6d^3n^{2d}\|f\|}{\sqrt[c]{\log\frac kc}}$$
where $f_k^*$ is defined as in (\ref{fkdef}).
\end{theorem}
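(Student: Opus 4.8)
The plan is to deduce the statement from Theorem~\ref{putinarbound} by applying that bound not to $f$ but to the strictly positive perturbations $g_\ep:=f-f^*+\ep$, $\ep>0$, and then optimizing over $\ep$. Since $S(\bar g)$ is closed and, being contained in $(-1,1)^n$, bounded, it is compact; hence $f^*$ is attained, and for every $\ep>0$ the polynomial $g_\ep$ has degree $d$ and minimum $\ep>0$ on $S(\bar g)$. If $c_0>0$ is the constant that Theorem~\ref{putinarbound} furnishes for $\bar g$, then $g_\ep\in M\bigl(\bar g,\,c_0\exp\bigl((d^2n^d\|g_\ep\|/\ep)^{c_0}\bigr)\bigr)$, and the whole argument consists in choosing $\ep$, in terms of the prescribed $k$, so that the degree bound on the right is at most $k$; then $g_\ep\in M(\bar g,k)$, which is the first displayed assertion, and the estimate on $f^*-f_k^*$ follows by unwinding the definition (\ref{fkdef}).

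The routine preparatory step is to bound $\|g_\ep\|$. First I would dispose of the trivial case $d=0$ ($f$ is then constant, $f=f^*$, and both claims are immediate because nonnegative constants lie in $M(\bar g,0)$), so assume $d\ge1$ and put $c:=\max(c_0,1)\ge1$. Since $\|\cdot\|$ satisfies the triangle inequality and assigns to a constant $t$ the value $|t|$, one has $\|g_\ep\|\le\|f\|+|f^*|+\ep$; and since $S(\bar g)\subseteq[-1,1]^n$, Lemma~\ref{corest} gives $|f^*|\le2dn^d\|f\|$, hence $\|g_\ep\|\le3dn^d\|f\|+\ep$ (using $dn^d\ge1$).

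Now comes the key choice: set $\ep:=\frac{6d^3n^{2d}\|f\|}{\sqrt[c]{\log(k/c)}}$. The hypothesis $k>c\exp\bigl((2d^2n^d)^c\bigr)$ gives $\log(k/c)>(2d^2n^d)^c$, hence $\sqrt[c]{\log(k/c)}>2d^2n^d\ge2>1$, so $\ep$ is well defined and positive. With this choice, $3dn^d\|f\|/\ep=\sqrt[c]{\log(k/c)}/(2d^2n^d)$, and therefore
$$d^2n^d\frac{\|g_\ep\|}{\ep}\ \le\ \tfrac12\sqrt[c]{\log(k/c)}+d^2n^d\ \le\ \sqrt[c]{\log(k/c)},$$
the last step because $2d^2n^d\le\sqrt[c]{\log(k/c)}$. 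Raising to the power $c_0\le c$ and using $\log(k/c)\ge1$ gives $\bigl(d^2n^d\|g_\ep\|/\ep\bigr)^{c_0}\le(\log(k/c))^{c_0/c}\le\log(k/c)$, so that $c_0\exp\bigl((d^2n^d\|g_\ep\|/\ep)^{c_0}\bigr)\le c_0k/c\le k$. Hence Theorem~\ref{putinarbound}, applied to $g_\ep$, yields $g_\ep=(f-f^*)+\ep\in M(\bar g,k)$, which is the first claim.

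Finally, from $g_\ep=f-(f^*-\ep)\in M(\bar g,k)$ and the definition (\ref{fkdef}) we get $f_k^*\ge f^*-\ep$, i.e.\ $f^*-f_k^*\le\ep$; conversely, every element of $M(\bar g,k)\subseteq M(\bar g)$ is nonnegative on $S(\bar g)$, so $f-a\in M(\bar g,k)$ forces $a\le f^*$ and hence $f_k^*\le f^*$. Thus $0\le f^*-f_k^*\le\ep=\frac{6d^3n^{2d}\|f\|}{\sqrt[c]{\log(k/c)}}$, as claimed. There is no real obstacle once Theorem~\ref{putinarbound} is available; the only thing requiring care is the bookkeeping relating the constant $c_0$ coming out of that theorem to the constant $c=\max(c_0,1)$ appearing in the statement — specifically, that $\sqrt[c]{x}$ is increasing in $x$ and $x^t$ is nondecreasing in $t$ for $x\ge1$ — which is exactly what makes the hypothesis $k>c\exp((2d^2n^d)^c)$ and the bound $\frac{6d^3n^{2d}\|f\|}{\sqrt[c]{\log(k/c)}}$ come out in the clean form stated.
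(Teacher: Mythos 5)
Your proposal is correct and follows essentially the same route as the paper: both apply Theorem~\ref{putinarbound} to the shifted polynomial $f-f^*+\ep$ with $\ep$ equal to the claimed error term, bound $\|f-f^*+\ep\|$ via the triangle inequality and Lemma~\ref{corest}, and check that the resulting degree bound does not exceed $k$ using the hypothesis $\log(k/c)>(2d^2n^d)^c$. The only difference is minor bookkeeping (your $c=\max(c_0,1)$ and the monotonicity argument versus the paper's direct reduction to $6d^3n^{2d}\le(4dn^d-1)(2d^2n^d)$), and your inequalities all check out.
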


\begin{proof}
Given $\bar g$, we choose $c>0$ like in Theorem \ref{putinarbound}. Now let
$f\in\rx$ be of degree $d$ with minimum $f^*$ on $S$ and
\begin{equation}\label{kbig}
k>c\exp((2d^2n^d)^c)
\end{equation}
be an integer. The case $d=0$ is trivial. We assume therefore $d\ge 1$.
Note that $k>c$ and hence $\log(k/c)>0$. Setting
\begin{equation}\label{mustadd}
a:=\frac{6d^3n^{2d}\|f\|}{\sqrt[c]{\log\frac kc}},
\end{equation}
all we have to prove is $h:=f-f^*+a\in M(\bar g,k)$ because the second claim
follows from this. By our choice of $c$ and the observation $\deg h=\deg f=d$,
it is enough to show that
$$c\exp\left(\left(d^2n^d\frac{\|h\|}a\right)^c\right)\le k,$$
or equivalently
$$d^2n^d\|h\|\le a\sqrt[c]{\log\frac kc}=6d^3n^{2d}\|f\|.$$
Observing that $\|h\|\le\|f\|+|f^*|+a$, it suffices to show that
$$\|f\|+|f^*|+a\le 6dn^d\|f\|.$$
Lemma \ref{corest} tells us that $|f^*|\le 2dn^d\|f\|$ and we are thus reduced
to verify that
$$a\le(4dn^d-1)\|f\|$$
which is by (\ref{mustadd}) equivalent to
$$6d^3n^{2d}\le (4dn^d-1)\sqrt[c]{\log\frac kc}.$$
By (\ref{kbig}), it is finally enough to check that
$6d^3n^{2d}\le(4dn^d-1)(2d^2n^d)$.
\end{proof}

As already said in the introduction, the hypothesis that $S(\bar g)$ is
contained in the open unit hypercube is just a technicality to avoid that
the bound gets even more complicated. In fact, if one does not insist on all
the information given in Theorem \ref{howmuchadd}, one gets a corollary
which is easy to remember and still gives the most important part of
information.

\begin{corollary}\label{ease}
Suppose $M(\bar g)$ is archimedean, $S(\bar g)\neq\emptyset$ and $f\in\rx$.
There is
\begin{itemize}
\item a constant $c>0$ depending only on $\bar g$ and
\item a constant $c'>0$ depending on $\bar g$ and $f$
\end{itemize}
such that for $f^*$ and $f_k^*$ as defined in (\ref{fstardef}) and
(\ref{fkdef}),
$$0\le f^*-f_k^*\le\frac{c'}{\sqrt[c]{\log\frac kc}}\qquad
  \text{for all large $k\in\N$.}$$
\end{corollary}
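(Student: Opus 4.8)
The plan is to derive Corollary \ref{ease} from Theorem \ref{howmuchadd} by absorbing the dependence on $f$ into the constant $c'$ and the dependence on the hypercube into an affine rescaling. First I would reduce to the case $S(\bar g)\subseteq(-1,1)^n$. Since $S(\bar g)$ is compact (as $M(\bar g)$ is archimedean, by Corollary \ref{ac} applied to $p=N-\|\x\|^2\in M(\bar g)$), there is $r>0$ with $S(\bar g)\subseteq(-r,r)^n$. Consider the linear coordinate change $X_i\mapsto rX_i$; writing $\tilde g_i:=g_i(r\x)$, $\tilde f:=f(r\x)$, one checks that $S(\tilde{\bar g})=\frac1r S(\bar g)\subseteq(-1,1)^n$, that $M(\tilde{\bar g})$ is still archimedean (the substitution is a ring automorphism of $\rx$ sending $M(\bar g)$ onto $M(\tilde{\bar g})$, and it sends some $N-\|\x\|^2$ to $N-r^2\|\x\|^2\in M(\tilde{\bar g})$, which dominates $\frac{N}{r^2}-\|\x\|^2$ up to a positive scalar; one uses here that a quadratic module is a cone), and that $\tilde f^*=f^*$, $\tilde f_k^*=f_k^*$ because the substitution maps $M(\bar g,k)$ onto $M(\tilde{\bar g},k)$ (it preserves degrees).

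Next, with $S(\bar g)\subseteq(-1,1)^n$ assumed, I would apply Theorem \ref{howmuchadd} verbatim. It produces $c>0$ depending only on $\bar g$ such that for all integers $k>c\exp((2d^2n^d)^c)$,
$$0\le f^*-f_k^*\le\frac{6d^3n^{2d}\|f\|}{\sqrt[c]{\log\frac kc}}.$$
Now $d=\deg f$, $n$ and $\|f\|$ are all fixed once $f$ is fixed, so setting $c':=6d^3n^{2d}\|f\|$ (a constant depending on $\bar g$ and $f$ — the dependence on $\bar g$ entering only through the choice of $c$, while one could equally call it a constant depending on $f$ and $n$) gives the displayed inequality for all integers $k>c\exp((2d^2n^d)^c)$, i.e. for all large $k\in\N$. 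In the rescaled situation the same bound holds with $\|f\|$ replaced by $\|f(r\x)\|$, so one simply takes $c':=6d^3n^{2d}\|f(r\x)\|$; this is still a constant depending only on $\bar g$ and $f$. The case $d=0$ is trivial: then $f$ is constant, $f^*=f=f_0^*$, and $f^*-f_k^*=0$ for all $k\ge0$.

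I do not expect any genuine obstacle here — the corollary is a deliberate weakening of Theorem \ref{howmuchadd} — so the only point requiring a little care is the rescaling step, specifically the verification that archimedeanness of $M(\bar g)$ is inherited by $M(\tilde{\bar g})$. This is already asserted in the introduction (``the property that $M(\bar g)$ be archimedean is preserved under affine linear coordinate changes. This is clear from Corollary \ref{ac}''), and indeed Corollary \ref{ac}(\ref{ac2}) makes it transparent: if $p\in M(\bar g)$ has $S(p)$ compact, then $p(r\x)\in M(\tilde{\bar g})$ has $S(p(r\x))=\frac1r S(p)$ compact, so $M(\tilde{\bar g})$ is archimedean. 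With that in hand the proof of Corollary \ref{ease} is a two-line deduction from Theorem \ref{howmuchadd}.
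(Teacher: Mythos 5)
Your proposal is correct and follows essentially the same route as the paper: the paper's proof also reduces to the unit hypercube by choosing a rescaling factor $r>0$ with $S(\bar g(r\x))\subseteq(-1,1)^n$, applies Theorem \ref{howmuchadd} to the rescaled data, and takes $c':=6d^3n^{2d}\|f(r\x)\|$. You merely spell out in more detail the ``simple scaling arguments'' the paper leaves implicit (invariance of archimedeanness and of $f^*$, $f_k^*$ under the substitution), which is fine.
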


\begin{proof}
Without loss of generality, assume $f\neq 0$. Set $d:=\deg f$. Since
$M(\bar g)$ is
archimedean, $S(\bar g)$ is compact. We can hence choose a rescaling factor
$r>0$ depending only on $\bar g$ such that $S(\bar g(r\x))\subseteq(-1,1)^n$.
Here $\bar g(r\x)$ denotes the tuple of rescaled polynomials $g_i(r\x)$.
Now Theorem \ref{howmuchadd} applied to $g(r\x)$ instead of $\bar g$ yields
$c>0$ that will together with $c':= 6d^3n^{2d}\|f(rX)\|$ have the desired
properties by simple scaling arguments.
\end{proof}

\begin{remark}
The bound on the difference $f^*-f_k^*$ presented in this section is much worse
than the corresponding one presented in \cite[Section 2]{sw2} which is based
on preordering representations (i.e., where $f_k^*$ would be defined using
$T(\bar g)$ instead of $M(\bar g)$). This raises the question whether it is
after all not such a bad thing to use preordering (instead of quadratic
module) representations for
optimization though they involve the $2^m$ products ${\bar g}^\de$ letting
the semidefinite programs get huge when $m$ is not small. However, it is not
known if Theorem \ref{howmuchadd} holds perhaps even with the bound from
\cite[Theorem 4]{sw2}. Compare also \cite[Remark 5]{sw2}.
\end{remark}

\section{The proof}\label{proof}

In this section, we give the proof of Theorem \ref{putinarbound}. The three
main ingredients are
\begin{itemize}
\item the bound for Schm\"udgen's theorem
presented in Theorem \ref{schmuedgenbound} above,
\item ideas from the (to some extent constructive) proof of Putinar's theorem
in \cite[Section 2]{sw3} and
\item the \L ojasiewicz inequality from semialgebraic geometry.
\end{itemize}
We start with some simple facts from calculus.

\begin{lemma}\label{lest}
If $0\neq f\in\rx$ has degree $d$, then
$$|f(x)-f(y)|\le\|x-y\|d^2n^{d-1}\sqrt n\|f\|$$
for all $x,y\in [-1,1]^n$.
\end{lemma}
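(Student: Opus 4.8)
The plan is to bound $f(x)-f(y)$ by the size of the gradient of $f$ along the segment joining $y$ to $x$, in the spirit of the proof of Lemma~\ref{corest}. We may assume $d\ge 1$, the case $d=0$ being trivial.

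First I would reduce to a pointwise gradient bound. Since $[-1,1]^n$ is convex, $z_t:=y+t(x-y)$ lies in $[-1,1]^n$ for all $t\in[0,1]$, so the mean value theorem applied to the polynomial $t\mapsto f(z_t)$ on $[0,1]$ gives some $t_0\in(0,1)$ with $f(x)-f(y)=\nabla f(z_{t_0})\cdot(x-y)$. By Cauchy--Schwarz and $\|\nabla f(z)\|\le\sqrt n\,\max_{1\le i\le n}|(\partial f/\partial X_i)(z)|$, it then suffices to show that $|(\partial f/\partial X_i)(z)|\le d^2n^{d-1}\|f\|$ for every $z\in[-1,1]^n$ and every $i$.

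For that I would estimate the partials coefficientwise, just as in Lemma~\ref{corest}. Write $f=\sum_\al a_\al\binom{|\al|}\al\x^\al$, so $\|f\|=\max_\al|a_\al|$. Using the identity $\al_i\binom{|\al|}\al=|\al|\binom{|\al|-1}{\al-e_i}$ (valid for $\al_i\ge 1$, with $e_i$ the $i$-th unit vector), differentiation and the substitution $\be=\al-e_i$ give
$$\frac{\partial f}{\partial X_i}=\sum_\be(|\be|+1)\,a_{\be+e_i}\binom{|\be|}\be\x^\be,$$
a polynomial of degree $\le d-1$. For $z\in[-1,1]^n$ one has $|z^\be|\le 1$, $|a_{\be+e_i}|\le\|f\|$ and $|\be|+1\le d$, so the multinomial identity $\sum_{|\be|=k}\binom{|\be|}\be=n^k$ gives
$$\left|\frac{\partial f}{\partial X_i}(z)\right|\le d\,\|f\|\sum_{k=0}^{d-1}n^k\le d^2n^{d-1}\|f\|,$$
where the last step is the elementary bound $\sum_{k=0}^{d-1}n^k\le dn^{d-1}$ (an equality for $d=1$; clear for $n=1$; and for $n\ge 2$, $d\ge 2$ it follows from $\sum_{k=0}^{d-1}n^k<n^d/(n-1)\le 2n^{d-1}\le dn^{d-1}$). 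Combining with the previous step proves the lemma.

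I do not expect any genuine obstacle: the statement is a Lipschitz estimate on the cube whose entire content is the tracking of constants. The one place that needs a little care is avoiding the loss of a constant factor, which is why I would estimate $\partial f/\partial X_i$ directly rather than bound $\|\partial f/\partial X_i\|\le d\|f\|$ and invoke Lemma~\ref{corest} (that route gives a worse constant and also needs a separate treatment of $d=1$), and why one wants the sharp inequality $\sum_{k=0}^{d-1}n^k\le dn^{d-1}$ rather than a cruder bound.
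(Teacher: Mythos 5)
Your proposal is correct and follows essentially the same route as the paper: the mean value theorem along the segment from $y$ to $x$, reduced to a pointwise bound $|\partial f/\partial X_i|\le d^2n^{d-1}\|f\|$ on the cube obtained by a coefficientwise estimate, with the factor $\sqrt n$ coming from passing from the partials to the directional derivative (you use Cauchy--Schwarz on $\|\nabla f\|$; the paper uses $\sum_i|e_i|\le\sqrt n$ --- same constant either way). No issues.
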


\begin{proof}
Denoting by $Df$ the derivative of $f$, by the mean value theorem, it is
enough to show that
\begin{equation}\label{derred}
|Df(x)(e)|\le d^2n^{d-1}\sqrt n\|f\|
\end{equation}
for all $x\in [-1,1]^n$ and $e\in\R^n$ with $\|e\|=1$.
A small computation (compare the proof of Lemma \ref{corest}) shows that
$$\left|\frac{\partial f(x)}{\partial x_i}\right|\le
  \|f\|\sum_{k=1}^dk(|x_1|+\dots+|x_n|)^{k-1}
  \le\|f\|\sum_{k=1}^dkn^{k-1}\le\|f\|d^2n^{d-1},$$
from which we conclude for all $x\in [-1,1]^n$ and $e\in\R^n$ with $\|e\|=1$,
$$|Df(x)(e)|=\left|\sum_{i=1}^n\frac{\partial f(x)}{\partial x_i}e_i\right|
  \le\sum_{i=1}^n\left|\frac{\partial f(x)}{\partial x_i}\right|\cdot|e_i|
  \le\|f\|d^2n^{d-1}\sum_{i=1}^n|e_i|.
$$
Because for a vector $e$ on the unit sphere in $\R^n$, $\sum_{i=1}^n|e_i|$
can reach at most $\sqrt n$, this implies (\ref{derred}).
\end{proof}

\begin{remark}\label{calculus}
For all $k\in\N$ and $y\in [0,1]$,
$$(y-1)^{2k}y\le\frac 1{2k+1}.$$
\end{remark}

The next lemma is a version of \cite[Lemma 2.3]{sw3} caring about
complexity issues. In \cite[Lemma 2.3]{sw3}, it is shown that, if
$C\subseteq\R^n$ is any compact set, $g_i\le 1$ on $C$ for all $i$ and
$f\in\rx$ is a polynomial with $f>0$ on $S(\bar g)$, then there exists
$\la\ge 0$ such that for all sufficiently large $k\in\N$,
\begin{equation}\label{clifting}
f-\la\sum_{i=1}^m(g_i-1)^{2k}g_i>0\qquad\text{on $C$.}
\end{equation}
The idea is that, if you want to show that $f\in M(\bar g)$, you first
subtract another polynomial from $f$ which lies obviously in $M(\bar g)$
such that the difference can be proved to lie in $M(\bar g)$ as well.
This other polynomial must necessarily be nonnegative on $S(\bar g)$ but
it should take on only very small values on $S(\bar g)$ so that the difference
is still positive on $S(\bar g)$.
On the region where you are outside and not too far away from $S(\bar g)$,
the polynomial you subtract should take large negative values
so that the difference gets positive on this region outside of $S(\bar g)$
(where $f$ itself might be negative). The hope is that the difference satisfies
an improved positivity condition which will help us to show that it lies in
$M(\bar g)$. To understand the lemma, it is helpful to observe that the
pointwise limit for $k\to\infty$ of this difference, which is the left hand
side of (\ref{claim}), is $f$ on $S(\bar g)$ and $\infty$ outside of
$S(\bar g)$.

\begin{lemma}\label{lifting}
For all $\bar g$ such that $S:=S(\bar g)\cap[-1,1]^n\neq\emptyset$ and
$g_i\le 1$ on $[-1,1]^n$, there are $c_0,c_1,c_2>0$ with the following
property:

For all polynomials $f\in\rx$ of degree $d$ with minimum $f^*>0$ on $S$,
if we set
\begin{equation}\label{defal}
L:=d^2n^{d-1}\frac{\|f\|}{f^*},\qquad
\la:=c_1d^2n^{d-1}\|f\|L^{c_2}
\end{equation}
and if $k\in\N$ satisfies
\begin{equation}\label{kcond}
2k+1\ge c_0(1+L^{c_0}),
\end{equation}
then the inequality
\begin{equation}\label{claim}
f-\la\sum_{i=1}^m(g_i-1)^{2k}g_i\ge\frac{f^*}2
\end{equation}
holds on $[-1,1]^n$.
\end{lemma}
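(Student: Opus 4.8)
The plan is to split $[-1,1]^n$ into two regions: a neighborhood $U$ of $S = S(\bar g)\cap[-1,1]^n$ where $f$ itself is already comfortably above $f^*/2$, and the complement $[-1,1]^n\setminus U$, where the subtracted polynomial $\la\sum_i(g_i-1)^{2k}g_i$ must be driven sufficiently negative (i.e.\ large in absolute value with the correct sign) to absorb the possible negativity of $f$ there. The \L{}ojasiewicz inequality will be the tool that makes the second region tractable: it gives constants $c, L_0>0$ (depending only on $\bar g$) such that $\dist(x,S)^{c}\le L_0\bigl(\text{something measuring the constraint violation at }x\bigr)$ for $x\in[-1,1]^n$, and this is precisely what lets us compare the geometric decay coming from $(g_i-1)^{2k}$ against $f^*/2$.

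First I would fix the neighborhood: put $U:=\{x\in[-1,1]^n\mid \dist(x,S)<\rho\}$ for a radius $\rho$ to be chosen as $\rho:=c'f^*/(d^2n^{d-1}\sqrt n\|f\|)$ with $c'$ a small absolute constant, so that by Lemma~\ref{lest} every $x\in U$ satisfies $f(x)\ge f^*-\|x-y\|d^2n^{d-1}\sqrt n\|f\|\ge f^*-\tfrac12 f^*=\tfrac{f^*}2$ for a suitable nearest point $y\in S$; since each $g_i\le 1$ on $[-1,1]^n$ the subtracted term $\sum_i(g_i-1)^{2k}g_i$ is of a controllable sign there, and in fact by Remark~\ref{calculus} one bounds $\sum_i(g_i-1)^{2k}g_i$ on the part of $U$ lying in $S(\bar g)$ by $m/(2k+1)$, so with $\la$ as in \eqref{defal} and $k$ large as in \eqref{kcond} the subtracted quantity stays $\le f^*/2$ in absolute value on $S$, giving \eqref{claim} on $U$ after a short estimate. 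Note $\rho$ is polynomially small in $L^{-1}$, which is why $c_0$ in \eqref{kcond} must be allowed to depend on $\bar g$ through the \L{}ojasiewicz exponent.

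Next, on $x\in[-1,1]^n\setminus U$: here $\dist(x,S)\ge\rho$, so at least one $g_i(x)$ is bounded away from $0$ from below in a quantitative sense — more precisely the \L{}ojasiewicz inequality converts $\dist(x,S)\ge\rho$ into a lower bound of the form $\max_i(-g_i(x))_{+}\ge$ (a fixed power of $\rho$), or if all $g_i(x)\ge 0$ then $x\in S$, contradiction; so some $g_{i_0}(x)<-\ep$ with $\ep\ge \rho^{C}/L_0$. For that index, $(g_{i_0}(x)-1)^{2k}g_{i_0}(x)\le -(1+\ep)^{2k}\ep\le -\ep$, which is exponentially large in $k$, while the other summands contribute $\ge -m\cdot\sup|g_i|(1+\sup|g_i|)^{2k}$ — here I'd use $g_i\le 1$ on one side but I still need an \emph{upper} bound on $|g_i|$ on $[-1,1]^n$, which is available from Lemma~\ref{corest} in terms of $\deg g_i$ and $\|g_i\|$, hence a constant depending only on $\bar g$. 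The decisive point is that the bad index gives a term of order $-\ep(1+\ep)^{2k}$ whereas the good indices give at worst $-mB^{2k}$ with $B$ a fixed constant $>1$; so we do \emph{not} get a clean sign unless $1+\ep > B$, which need not hold. The fix — and this is the actual heart of the argument, following \cite[Lemma~2.3]{sw3} — is to first replace the tuple $\bar g$ by $(g_i/N)$ or otherwise rescale so that $|g_i|\le 1$, equivalently work with the hypothesis $g_i\le 1$ as given and additionally exploit $|g_i|\le 1$ which one may assume after rescaling absorbed into the constants; under $|g_i|<1$ off $S$ one has $(g_i(x)-1)^{2k}g_i(x)\le -(1+\ep)^{2k}\ep$ only when $g_i(x)<0$, and for $g_i(x)\in[0,1]$ one has $(g_i-1)^{2k}g_i\le 1/(2k+1)$ by Remark~\ref{calculus}. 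So the full sum is $\le -(1+\ep)^{2k}\ep + m/(2k+1)$, and choosing $\la$ as in \eqref{defal} with $c_2$ tied to the \L{}ojasiewicz exponent $C$ makes $\la\cdot(1+\ep)^{2k}\ep$ exceed $|f(x)| + f^*/2 + \la m/(2k+1)$ — using $|f(x)|\le 2dn^d\|f\|$ from Lemma~\ref{corest} — once $2k+1\ge c_0(1+L^{c_0})$.

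The main obstacle, then, is the bookkeeping in the off-$U$ region: one must (i) produce the \L{}ojasiewicz inequality in a form that gives an \emph{effective} exponent $C$ depending on $\bar g$ (existence suffices for the statement, since $c_0,c_1,c_2$ are allowed to depend on $\bar g$, so I would simply invoke the standard semialgebraic \L{}ojasiewicz inequality on the compact set $[-1,1]^n$ without chasing the exponent), and (ii) balance $\la$, $\ep=\ep(\rho)=\ep(L)$, and $k$ so that the exponential gain $(1+\ep)^{2k}$ beats the polynomial loss $|f(x)|/\la$, which forces the stated shape $\la=c_1 d^2n^{d-1}\|f\|L^{c_2}$ and $2k+1\ge c_0(1+L^{c_0})$ — the algebra is elementary once the logarithm of the inequality $\la\ep(1+\ep)^{2k}\ge 2dn^d\|f\|+f^*/2$ is taken and $\ep$ is expressed as a fixed power of $1/L$. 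Collecting the constants produced in the two regions into a single triple $(c_0,c_1,c_2)$ finishes the proof.
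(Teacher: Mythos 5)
Your overall strategy coincides with the paper's: split $[-1,1]^n$ into a region where $f$ is already safely above the target and a region where the \L{}ojasiewicz inequality guarantees some $g_{i_0}(x)\le-\de$ with $\de$ a fixed negative power of $L$, then balance $\la$, $\de$ and $k$ using the bound $(y-1)^{2k}y\le\frac1{2k+1}$ of Remark~\ref{calculus} for the summands with $g_i(x)\in[0,1]$. (The paper splits on the sublevel set $A=\{f\le\frac34f^*\}$ rather than on a distance neighborhood of $S$, but via Lemma~\ref{lest} the two are interchangeable.) Two points, however, do not hold up as written.

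First, the passage you call ``the actual heart of the argument'' addresses a problem that does not exist, and the proposed fix is not legitimate. To prove \eqref{claim} you need an \emph{upper} bound on $\sum_{i}(g_i(x)-1)^{2k}g_i(x)$; every summand with $g_i(x)<0$ is $\le 0$ and can therefore only help, no matter how large $|g_i(x)|$ is. There is no competition between $-\ep(1+\ep)^{2k}$ and $-mB^{2k}$, so no hypothesis $|g_i|\le 1$ is needed --- the lemma assumes only $g_i\le 1$, and the paper's proof uses nothing more. Moreover, replacing $\bar g$ by a rescaled tuple \emph{inside} the lemma would change the polynomial $\sum_i(g_i-1)^{2k}g_i$ appearing in \eqref{claim}, i.e., it would change the statement being proved; such a normalization belongs to the proof of Theorem~\ref{putinarbound} (where it is indeed performed), not here. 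Second, on the near-$S$ region your arithmetic does not close: you arrange $f\ge\frac{f^*}2$ on $U$, but you then subtract $\la\sum_i(g_i-1)^{2k}g_i$, which can be as large as $\la m/(2k+1)>0$, so you land strictly below $\frac{f^*}2$; the intermediate claim that the subtracted quantity being $\le\frac{f^*}2$ in absolute value yields \eqref{claim} is false. You need a margin --- e.g.\ choose $\rho$ so that $f\ge\frac34f^*$ on $U$ and impose $\la m/(2k+1)\le\frac{f^*}4$, which is exactly the role of the threshold $\frac34f^*$ and of condition \eqref{cond3} in the paper. The same bookkeeping is needed on the far region, where the one guaranteed summand $\le-\de$ (note that $(g_{i_0}-1)^{2k}\ge1$ already suffices; the exponential factor $(1+\ep)^{2k}$ buys you nothing you need, and $k$ enters only through $1/(2k+1)$) must absorb both the drop of $f$ below $f^*$, of size at most $2d^2n^d\|f\|$ by Lemma~\ref{lest}, and the $\la(m-1)/(2k+1)$ loss from the nonnegative summands. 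With these repairs your argument becomes the paper's proof.
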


\begin{proof}
By the \L ojasiewicz inequality for semialgebraic functions
(Corollary 2.6.7 in \cite{bcr}), we can choose
$c_2,c_3>0$ such that
\begin{equation}\label{lojasiewicz}
\dist(x,S)^{c_2}\le -c_3\min\{g_1(x),\dots,g_m(x),0\}
\end{equation}
for all $x\in[-1,1]^n$ where $\dist(x,S)$ denotes the distance of $x$ to $S$.
Set
\begin{align}
\label{defc4}c_4&:=c_3(4n)^{c_2},\\
\label{defc1}c_1&:=4nc_4
\end{align}
and choose $c_0\in\N$ big enough to guarantee that
\begin{align}
\label{c0cond2}
c_0(1+r^{c_0})&\ge 2(m-1)c_4r^{c_2}\qquad\text{and}\\
\label{c0cond3}
c_0(1+r^{c_0})&\ge 4mc_1r^{c_2+1}
\end{align}
for all $r\ge 0$. Now suppose $f\in\rx$ is of degree $d$ with
minimum $f^*>0$ on $S$ and consider the set
$$A:=\left\{x\in[-1,1]^n\mid f(x)\le\frac 34f^*\right\}.$$
By Lemma \ref{lest}, we get for all $x\in A$ and $y\in S$
$$\frac{f^*}4\le f(y)-f(x)\le\|x-y\|d^2n^{d-1}\sqrt n\|f\|\le
  \|x-y\|d^2n^d\|f\|.$$
Since this is valid for arbitrary $y\in S$, it holds that
$$\frac{f^*}{4d^2n^d\|f\|}\le\dist(x,S)$$
for all $x\in A$. We combine this now with (\ref{lojasiewicz}) and get
$$\min\{g_1(x),\dots,g_m(x)\}\le -\frac 1{c_3}
  \left(\frac{f^*}{4d^2n^d\|f\|}\right)^{c_2}
$$
for $x\in A$. We have omitted the argument $0$ in the minimum which is here
redundant because of $A\cap S=\emptyset$. By setting
\begin{equation}\label{defdelta}
\de:=\frac 1{c_4L^{c_2}}>0,
\end{equation}
where we define $L$ like in (\ref{defal}), and having a look at
(\ref{defc4}), we can rewrite this as
\begin{equation}\label{rewritten}
\min\{g_1(x),\dots,g_m(x)\}\le -\de.
\end{equation}
Define $\la$ and $k$ like in (\ref{defal}) and (\ref{kcond}). For later use,
we note
\begin{equation}\label{lal}
\la=c_1L^{c_2+1}f^*.
\end{equation}
We claim now that
\begin{align}
\label{cond1}f+\frac{\la\de}2&\ge\frac{f^*}2\text{\ on\ } [-1,1]^n,\\
\label{cond2}\frac\de 2&\ge\frac{m-1}{2k+1}\qquad\text{and}\\
\label{cond3}\frac{f^*}4&\ge\frac{\la m}{2k+1}.
\end{align}

Let us prove these claims.
If we choose in Lemma \ref{lest} for $y$ a minimizer of $f$ on $S$, we
obtain
$$|f(x)-f^*|\le\diam([-1,1]^n)d^2n^{d-1}\sqrt n\|f\|
  =2\sqrt nd^2n^{d-1}\sqrt n\|f\|=2d^2n^d\|f\|
$$
for all $x\in [-1,1]^n$, noting that the diameter of $[-1,1]^n$ is $2\sqrt n$.
In particular, we observe
$$f\ge f^*-2d^2n^d\|f\|\ge\frac{f^*}2-2d^2n^d\|f\|\qquad
  \text{on\ }[-1,1]^n.$$
Together with the equation
$$\frac{\la\delta}2=2d^2n^d\|f\|,$$
which is clear from (\ref{defal}), (\ref{defc1}) and (\ref{defdelta}),
this yields (\ref{cond1}). Using (\ref{kcond}), (\ref{c0cond2}) and
(\ref{defdelta}), we see that
$$(2k+1)\de\ge c_0(1+L^{c_0})\de\ge 2(m-1)c_4L^{c_2}\de=2(m-1)$$
which is nothing else than (\ref{cond2}). Finally, we exploit (\ref{kcond}),
(\ref{c0cond3}) and (\ref{lal}), to see that
$$(2k+1)f^*\ge c_0(1+L^{c_0})f^*\ge 4mc_1L^{c_2+1}f^*=4m\la,$$
i.e., (\ref{cond3}) holds.

Now (\ref{cond1}), (\ref{cond2}) and (\ref{cond3})
will enable us to show our claim (\ref{claim}). If $x\in A$, then in the sum
\begin{equation}\label{sumlabel}
\sum_{i=1}^m(g_i(x)-1)^{2k}g_i(x)
\end{equation}
at most $m-1$ summands are nonnegative. By Remark \ref{calculus}, these
nonnegative summands add up to at most $(m-1)/(2k+1)$. At least one
summand is negative, even $\le-\de$ by (\ref{rewritten}). All in all, if we
evaluate the left hand side of our claim (\ref{claim}) in a point $x\in A$,
then it is
$$\ge f(x)-\la\frac{m-1}{2k+1}+\la\de\ge
  \underbrace{f(x)+\frac{\la\de}2}_{\ge\frac{f^*}2\text{\ by (\ref{cond1})}}
  +\la
  \underbrace{\left(\frac\de 2-\frac{m-1}{2k+1}\right)}_
                    {\ge 0\text{\ by (\ref{cond2})}
             }
  \ge\frac{f^*}2.
$$
When we evaluate it in a point $x\in [-1,1]^n\setminus A$, all summands of
the sum (\ref{sumlabel}) might happen to be nonnegative. Again by Remark
\ref{calculus}, they add up to at most $m/(2k+1)$. But at the same
time,
the definition of $A$ gives us a good lower bound on $f(x)$ so that the
result is
$$\ge \frac 34{f^*}-\la\frac m{2k+1}\ge \frac{f^*}2+
  \underbrace{\frac{f^*}4-\frac{\la m}{2k+1}}_{\ge 0\text{\ by (\ref{cond3})}}
  \ge\frac{f^*}2.$$
\end{proof}

\begin{proposition}\label{normprop}
If $p,q\in\rx$ are both homogeneous (i.e., all of their
respective monomials have the same degree), then $\|pq\|\le\|p\|\|q\|$.
For arbitrary
$s\in\N$ and polynomials $0\neq p_1,\dots,p_s\in\rx$, we have
$$\|p_1\dotsm p_s\|\le (1+\deg p_1)\dotsm(1+\deg p_s)\|p_1\|\dotsm\|p_s\|.$$
\end{proposition}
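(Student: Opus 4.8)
\medskip

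The plan is to treat the two assertions separately, deriving the general product inequality as a consequence of the homogeneous case after a homogenization trick. First I would prove the homogeneous bound $\|pq\|\le\|p\|\|q\|$. Write $p=\sum_{|\al|=e}a_\al\binom{|\al|}{\al}\x^\al$ and $q=\sum_{|\be|=e'}b_\be\binom{|\be|}{\be}\x^\be$, so that $\|p\|=\max_\al|a_\al|$ and $\|q\|=\max_\be|b_\be|$. The product is $pq=\sum_{|\ga|=e+e'}c_\ga\x^\ga$ with $c_\ga=\sum_{\al+\be=\ga}a_\al b_\be\binom{|\al|}{\al}\binom{|\be|}{\be}$. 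The key computation is the Vandermonde-type multinomial identity
$$\sum_{\substack{\al+\be=\ga\\|\al|=e}}\binom{|\al|}{\al}\binom{|\be|}{\be}=\binom{|\ga|}{\ga},$$
which holds because both sides count the ways of arranging a multiset of $|\ga|$ symbols of the prescribed type and then marking $e$ of the positions (equivalently, it is the coefficient identity coming from $(x_1+\dots+x_n)^e(x_1+\dots+x_n)^{e'}=(x_1+\dots+x_n)^{e+e'}$). Bounding $|a_\al|\le\|p\|$ and $|b_\be|\le\|q\|$ and pulling them out of the sum then gives $|c_\ga|\le\|p\|\,\|q\|\binom{|\ga|}{\ga}$, i.e. $\|pq\|\le\|p\|\,\|q\|$.

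For the general case, I would reduce to the homogeneous case by homogenizing. Introduce a fresh indeterminate $X_0$ and, for $p\in\rx$ of degree $\de$, let $\tilde p\in\R[X_0,\x]$ be its homogenization to degree $\de$; note that $\tilde p$ has the \emph{same} nonzero coefficients as $p$, only attached to monomials of larger total degree, and in fact one checks directly from the definition of the norm that $\|p\|$ and $\|\tilde p\|$ (the latter computed with multinomial coefficients in $n+1$ variables) need not coincide — so a little care is needed here. The cleaner route, which I expect to use, is to argue by induction on $s$: it suffices to prove $\|pq\|\le(1+\deg p)(1+\deg q)\|p\|\,\|q\|$ for two (not necessarily homogeneous) polynomials and then iterate, absorbing the degree factors. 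Decompose $p=\sum_{i=0}^{\deg p}p_i$ and $q=\sum_{j=0}^{\deg q}q_j$ into homogeneous parts; by the triangle inequality for $\|\cdot\|$ and the homogeneous case applied to each $p_iq_j$, we get $\|pq\|\le\sum_{i,j}\|p_iq_j\|\le\sum_{i,j}\|p_i\|\,\|q_j\|$. Now $\|p_i\|\le\|p\|$ and $\|q_j\|\le\|q\|$ for every $i,j$ — this uses that the norm of a polynomial is the max over \emph{all} its coefficients (divided by multinomials), hence dominates the norm of any homogeneous component — so the double sum is at most $(1+\deg p)(1+\deg q)\|p\|\,\|q\|$, as claimed. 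Iterating this bound over $p_1,\dots,p_s$ and using $\deg(p_1\dotsm p_{s-1})\le\deg p_1+\dots+\deg p_{s-1}$, one has to check that the accumulated degree factors telescope to $(1+\deg p_1)\dotsm(1+\deg p_s)$; this is the routine but slightly fiddly bookkeeping step.

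The main obstacle is really just the homogeneous identity $\sum_{\al+\be=\ga}\binom{|\al|}{\al}\binom{|\be|}{\be}=\binom{|\ga|}{\ga}$, where the sum runs over splittings with $|\al|$ fixed equal to $\deg p$; everything else is triangle inequality plus the observation that $\|\cdot\|$ restricted to homogeneous components can only decrease. One subtlety worth flagging: in the general-case induction the degree bound on the partial product $p_1\dotsm p_{s-1}$ could be strictly smaller than $\deg p_1+\dots+\deg p_{s-1}$ (cancellation of top terms), but since the degree only appears multiplicatively with a ``$1+$'' it does no harm to use the larger upper bound throughout, so the stated inequality follows without needing to track exact degrees.
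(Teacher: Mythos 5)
Your proof of the homogeneous case is correct and is actually more self-contained than the paper's, which simply cites \cite[Lemma 8]{sw2}; the Vandermonde-type identity $\sum_{\al+\be=\ga,\,|\al|=e}\binom{|\al|}{\al}\binom{|\be|}{\be}=\binom{|\ga|}{\ga}$ is exactly the right tool, and your derivation of $|c_\ga|\le\|p\|\|q\|\binom{|\ga|}{\ga}$ from it is fine. Your observation that the norm of a homogeneous component is at most the norm of the whole polynomial is also correct and is the other ingredient the paper uses.

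However, the general case as you set it up has a genuine gap: the induction on $s$ via the two-factor bound $\|pq\|\le(1+\deg p)(1+\deg q)\|p\|\|q\|$ does \emph{not} telescope to $(1+\deg p_1)\dotsm(1+\deg p_s)$. Applying it to $(p_1\dotsm p_{s-1})\cdot p_s$ and the inductive hypothesis gives
$$\|p_1\dotsm p_s\|\le\bigl(1+\textstyle\sum_{i<s}\deg p_i\bigr)(1+\deg p_s)\prod_{i<s}(1+\deg p_i)\prod_i\|p_i\|,$$
which carries the spurious factor $1+\sum_{i<s}\deg p_i$. Concretely, for $s=3$ and three linear polynomials your iteration yields the constant $3\cdot 2\cdot 2\cdot 2=24$ rather than the claimed $2^3=8$, so the ``routine bookkeeping'' you defer does not go through. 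The fix is the paper's argument: decompose \emph{all} factors at once, $p_i=\sum_{k}p_{ik}$, expand the product into at most $(1+\deg p_1)\dotsm(1+\deg p_s)$ terms $p_{1k_1}\dotsm p_{sk_s}$, and bound each term by $\|p_{1k_1}\|\dotsm\|p_{sk_s}\|\le\|p_1\|\dotsm\|p_s\|$ using the homogeneous inequality iterated (which is legitimate because a product of homogeneous polynomials is homogeneous, so no degree factors accumulate there). You already have every ingredient for this; only the order of operations needs to change.
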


\begin{proof}
The statement for homogeneous $p$ and $q$ can be found in
\cite[Lemma 8]{sw2}. The second claim follows from this by writing each
$p_i$ as a sum
$p_i=\sum_kp_{ik}$ of homogeneous degree $k$ polynomials $p_{ik}$. Multiply
the $p_i$
by distributing out all such sums and apply the triangle inequality to the
sum which arises in this way. Then use
$$\|p_{1{k_1}}\dotsm p_{s{k_s}}\|\le\|p_{1{k_1}}\|\dotsm\|p_{s{k_s}}\|
\le\|p_1\|\dotsm\|p_s\|.$$
Now factor out $\|p_1\|\dotsm\|p_s\|$ and recombine the terms of the sum
which now are all constant $1$.
\end{proof}

\begin{lemma}\label{explemma}
For all $c_1,c_2,c_3 >0$, there is $c>0$ such that
$$c_1\exp(c_2 r^{c_3})\le c\exp(r^c)\qquad\text{for all $r\ge 0$}.$$
\end{lemma}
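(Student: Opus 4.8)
The plan is to reduce the claimed inequality to a manageable elementary estimate by choosing $c$ large enough to dominate three separate effects: the multiplicative constant $c_1$, the constant $c_2$ inside the exponential, and the discrepancy between the exponent $c_3$ and the target exponent $c$. First I would dispose of the region of small $r$, say $0\le r\le 1$, where $r^{c_3}\le 1$ and $r^c\le 1$ as soon as $c\ge 1$, so that $c_1\exp(c_2r^{c_3})\le c_1\exp(c_2)$ is bounded by a constant; picking $c\ge c_1\exp(c_2)$ handles this range since $c\exp(r^c)\ge c$.

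For the range $r\ge 1$, the idea is that $r^{c_3}\le r^c$ whenever $c\ge c_3$, so the main task is to absorb the factor $c_2$ in front of $r^{c_3}$ into the exponent. Here I would use the crude bound $c_2 r^{c_3}\le (c_2+r^{c_3})r^{c_3}$? — no, more simply: for $r\ge 1$ one has $c_2 r^{c_3}\le r^{c_3+\log_r c_2}$, which is awkward because of the base-$r$ logarithm. A cleaner route is to note $c_2 r^{c_3}\le r^{c_3}\cdot r^{c_2}= r^{c_3+c_2}$ is false in general (it fails for $r$ between $1$ and something), but $c_2\le r^{c_2}$ does hold once $r\ge c_2^{1/c_2}$; so I would split further at $r=\max\{1,c_2^{1/c_2}\}$. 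On $1\le r\le\max\{1,c_2^{1/c_2}\}$ the quantity $c_1\exp(c_2r^{c_3})$ is again bounded by a constant depending only on $c_1,c_2,c_3$, and for $r$ beyond that threshold we get $c_2r^{c_3}\le r^{c_2}r^{c_3}=r^{c_2+c_3}\le r^c$ provided $c\ge c_2+c_3$. Finally $c_1\le c$ if $c\ge c_1$, so on this last range $c_1\exp(c_2r^{c_3})\le c\exp(r^c)$.

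Putting the three ranges together, the choice
$$c:=\max\{1,\ c_1,\ c_2+c_3,\ c_1\exp(c_2\max\{1,c_2\}^{c_3/c_2})\}$$
(or any larger value) works: the interval-by-interval estimates above all hold, and on each piece $c\exp(r^c)\ge c$ dominates the constant bounds while $c\exp(r^c)\ge c_1\exp(r^{c})\ge c_1\exp(c_2r^{c_3})$ handles the genuinely growing part. I do not expect any real obstacle here; the only mildly delicate point is bookkeeping the finitely many threshold values of $r$ and making sure the constant chosen simultaneously beats the supremum of $c_1\exp(c_2r^{c_3})$ over each bounded subinterval, which is automatic since a continuous function on a compact interval is bounded. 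The proof is therefore a routine case analysis with an explicit, if unoptimised, value of $c$.
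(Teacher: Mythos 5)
Your proof is correct and follows essentially the same strategy as the paper's: bound the left-hand side by a constant on a compact initial interval (where $c\exp(r^c)\ge c$ suffices) and, on the unbounded tail, absorb the factor $c_2$ into a power of $r$ so that $c_2r^{c_3}\le r^c$. The only cosmetic difference is your choice of threshold $\max\{1,c_2^{1/c_2}\}$ with the estimate $c_2\le r^{c_2}$, where the paper splits at $r=2$ and uses $c_2\le 2^{c/2}\le r^{c/2}$ together with $r^{c_3}\le r^{c/2}$.
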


\begin{proof}
Choose any $c\ge c_1\exp(c_22^{c_3})$ such that
$c_3\le c/2$ and $c_2\le 2^{c/2}$. Then for $r\in [0,2]$,
$$c_1\exp(c_2 r^{c_3})\le c_1\exp(c_2 2^{c_3})\le c\le c\exp(r^c)$$
and for $r\ge 2$ (observing that $c_1\le c$),
$$c_1\exp(c_2 r^{c_3})\le c\exp(2^{c/2}r^{c/2})\le c\exp(r^c).$$
\end{proof}

We resume the discussion before Lemma \ref{lifting}. With regard to
(\ref{claim}), we can for the moment concentrate on polynomials positive
on the hypercube $[-1,1]^n$. If this hypercube could be described by a single
polynomial inequality, i.e., if we had $[-1,1]^n=S(p)$ for some $p\in\rx$,
then the idea would be to apply the bound for
Schm\"udgen's Positivstellensatz now.
The clue is here that $p$ is a single polynomial and hence preordering
and quadratic module representations are the same, i.e., $T(p)=M(p)$.
The following lemma works around the fact that $[-1,1]^n=S(p)$ can
only happen when $n=1$. We round the edges of the hypercube.

\begin{lemma}\label{round}
Let $S\subseteq (-1,1)^n$ be compact. Then
$1-\frac 1d-(X_1^{2d}+\dots X_n^{2d})>0$ on $S$ for all sufficiently large
$d\in\N$.
\end{lemma}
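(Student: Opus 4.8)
The plan is to use compactness of $S$ to push $S$ strictly away from the boundary of the hypercube, and then reduce the claim to a one‑line limit computation. If $S=\emptyset$ there is nothing to prove, so assume $S\neq\emptyset$. The continuous function $x\mapsto\max_{1\le i\le n}|x_i|$ attains its maximum on the compact set $S$ at some point $x^\ast\in S\subseteq(-1,1)^n$; call this maximum $\rh$. Since $x^\ast\in(-1,1)^n$ we have $0\le\rh<1$, and by construction $|x_i|\le\rh$ for every $x\in S$ and every $i\in\{1,\dots,n\}$.

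Next I would estimate the polynomial pointwise on $S$. For $x\in S$ and each $i$ we have $x_i^{2d}\le\rh^{2d}$, hence $X_1^{2d}+\dots+X_n^{2d}\le n\rh^{2d}$ at $x$. Therefore it suffices to show $1-\frac1d-n\rh^{2d}>0$ for all large $d$, i.e.\ $n\rh^{2d}<1-\frac1d$. Because $0\le\rh<1$, the sequence $n\rh^{2d}$ is nonincreasing and tends to $0$, while $1-\frac1d$ increases to $1$; concretely, choose $d_0\in\N$ with $n\rh^{2d_0}<\frac12$ (possible as $\rh<1$), and then for every integer $d\ge\max\{d_0,2\}$ one gets $n\rh^{2d}\le n\rh^{2d_0}<\frac12\le 1-\frac1d$. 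This gives $1-\frac1d-(X_1^{2d}+\dots+X_n^{2d})>0$ on $S$ for all such $d$, which is the assertion.

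There is essentially no hard step here: the whole argument is a compactness reduction followed by a geometric‑series estimate. The only point that genuinely requires care — and the place where compactness of $S$ (as opposed to mere boundedness) is used — is the strict inequality $\rh<1$; for instance $S=(-1,1)^n$ itself admits no such $\rh$, and indeed the conclusion would fail there. Beyond that I do not anticipate any obstacle.
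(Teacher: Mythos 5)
Your proof is correct. It does, however, run along a slightly different track than the paper's. The paper argues softly: it sets $A_d:=\{x\in S\mid x_1^{2d}+\dots+x_n^{2d}\ge 1-\frac1d\}$, observes that these form a decreasing sequence of compact sets with empty intersection (since for each fixed $x\in(-1,1)^n$ one has $x_1^{2d}+\dots+x_n^{2d}\to 0$ while $1-\frac1d\to 1$), and concludes $A_d=\emptyset$ for large $d$ from the finite intersection property. You instead apply the extreme value theorem once, to $x\mapsto\max_i|x_i|$, to extract a uniform radius $\rh<1$, and then finish with an explicit geometric-series estimate $n\rh^{2d}<\tfrac12\le 1-\tfrac1d$. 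Both are one-paragraph compactness arguments of comparable length; yours has the minor advantage of being quantitative (it produces an explicit threshold $d_0$ in terms of $\rh$ and $n$), while the paper's avoids even that small computation. Your closing remark correctly identifies where compactness (rather than boundedness) enters — the strict inequality $\rh<1$ — which is exactly the point at which the paper's appeal to the finite intersection property would also fail for a non-closed $S$.
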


\begin{proof}
Consider for each $1\le d\in\N$ the set
$$A_d:=\left\{x\in S\mid x_1^{2d}+\dots+x_n^{2d}\ge 1-\frac 1d\right\}.$$
This gives a decreasing sequence
$A_1\supseteq A_2\supseteq A_3\supseteq\dots$
of compact sets whose intersection $\cap_{d=1}^\infty A_d$ is empty by
calculus.
By compactness, a finite subintersection is empty, i.e.,
$A_d=\emptyset$ for all large $d\in\N$.
\end{proof}

Note that in the proof of Putinar's theorem in \cite[Section 2]{sw3} where we
were not interested in complexity, a different approach has been taken.
Condition (\ref{clifting}) has been established for a polyhedron $C$ which is
even bigger than the hypercube, so big that preordering representations
certifying nonnegativity on $C$ can be turned into quadratic module
representations certifying nonnegativity on the hypercube. The advantage was
that we could use P\'olya's theorem \cite{pol} which is much more elementary
than Schm\"udgen's theorem. Despite the existence of the effective version
\cite{pr} of that
theorem of P\'olya, it seems that establishing positivity on such a big
polyhedron $C$ is too expensive from the complexity point of view. Though
it is not so nice, we therefore work here with a rounded hypercube and Theorem
\ref{schmuedgenbound} instead.

We finally attack the proof of Theorem \ref{putinarbound}.

\begin{proof}[Proof of Theorem \ref{putinarbound}]
By a simple scaling argument, we may assume that $\|g_i\|\le 1$ and
$g_i\le 1$ on $[-1,1]$ for all $i$.
According to Lemma \ref{round}, we can choose $d_0\in\N$ such that
$$p:=1-\frac 1{d_0}-(X_1^{2d}+\dots+X_n^{2d})>0\text{\ on\ }S(\bar g).$$
By Putinar's Theorem \ref{putinar}, we have $p\in M(\bar g)$
and therefore
\begin{equation}\label{solid}
p\in M(\bar g,d_1)
\end{equation}
for some $d_1\in\N$.
Choose $d_2\in\N$ such that
\begin{equation}\label{d2def}
1+\deg g_i\le d_2\qquad\text{for all $i\in\{1,\dots,m\}$.}
\end{equation}
Now we choose $c_0,c_1,c_2$ like in Lemma \ref{lifting}, define
$L$ and $\la$ like in (\ref{defal}) and choose the smallest $k\in\N$
satisfying (\ref{kcond}). Then
\begin{equation}\label{twok}
2k+1\le c_0(1+L^{c_0})+2.
\end{equation}
Let $c_3\ge 1$ denote the constant existing by Theorem \ref{schmuedgenbound}
(which is there called $c$ and gives the bound for preordering
representations of polynomials positive on $S(\bar g)$).
Using Lemma \ref{explemma}, it is easy to see
that we can choose $c_4,c_5,c_6,c_7,c\ge 0$ satisfying
\begin{align}
\label{choicec4}
c_32^{c_3}r^{2+2c_3}n^{c_3r}&
\le c_4(\exp(c_4r))\\
\label{choicec5}
2r+2c_1r^{c_2+1}d_2^{r(1+r^{c_0})+1}&
\le c_5\exp(r^{c_5})\\
\label{choicec6}
c_4\exp(2c_4d_2r(1+r^{c_0}+3))&
\le c_6\exp(r^{c_6})\\
\label{choicec7}
c_5^{c_3}c_6\exp(c_3r^{c_5}+r^{c_6})&
\le c_7\exp(r^{c_7})\\
\label{choicec}
c_7\exp(r^{c_7})+d_1&
\le c\exp(r^c)
\end{align}
for all $r\ge 0$.
Now let $f\in\rx$ be a polynomial of degree $d\ge 1$ with
$$f^\ast:=\min\{f(x)\mid x\in S(\bar g)\}>0.$$
We are going to apply Theorem \ref{schmuedgenbound} to
\begin{equation*}
h:=f-\la\sum_{i=1}^m(g_i-1)^{2k}g_i.
\end{equation*}
By Lemma \ref{lifting}, (\ref{claim}) holds for this polynomial,
in particular
\begin{equation}\label{hstar}
h^\ast:=\min\{h(x)\mid x\in S(p)\}
\ge\frac{f^\ast}2.
\end{equation}
By Proposition \ref{normprop} and the definition of $d_2$ in (\ref{d2def}),
\begin{align}
\label{hnorm}
\|h\|&\le\|f\|+\la d_2^{2k+1}\\
\label{dhbound}
\deg h&\le\max\{d,(2k+1)d_2,1\}=:d_h.
\end{align}
By Theorem \ref{schmuedgenbound} (respectively the above choice of
$c_3\ge 1$),
we get
\begin{equation}\label{hbound}
h\in T(p,k_h)
\qquad\text{where\ }k_h:=c_3d_h^2\left(1+d_h^2n^{d_h}
\frac{\|h\|}{h^*}\right)^{c_3}.
\end{equation}
Note that $\|h\|/h^*\ge 1$ since $0<h^*\le h(0)\le\|h\|$. We use this
to simplify the degree bound in (\ref{hbound}). Obviously
\begin{multline}\label{khbound}
k_h\le
c_3d_h^2\left(2d_h^2n^{d_h}\frac{\|h\|}{h^*}\right)^{c_3}\\
\le c_32^{c_3}d_h^{2+2c_3}n^{c_3d_h}\left(\frac{\|h\|}{h^*}\right)^{c_3}
\le c_4\exp(c_4d_h)\left(\frac{\|h\|}{h^*}\right)^{c_3}
\end{multline}
by choice of $c_4$ in (\ref{choicec4}).
Moreover, we have
\begin{multline}\label{hhbound}
\frac{\|h\|}{h^*}\le\frac 2{f^*}(\|f\|+\la d_2^{2k+1})
=2\frac{\|f\|}{f^*}+2c_1d_2^{2k+1}L^{c_2+1}\\
\le 2L+2c_1d_2^{2k+1}L^{c_2+1}
= 2L+2c_1L^{c_2+1}d_2^{c_0(1+L^{c_0})+1}
\le c_5\exp(L^{c_5})
\end{multline}
by (\ref{hnorm}), (\ref{hstar}), (\ref{twok}), (\ref{lal}) and by the choice
of $c_5$ in (\ref{choicec5}).
It follows that
\begin{align*}
d_h&\le d(2k+2)d_2&\text{(by (\ref{dhbound}))}\\
&\le d(c_0(1+L^{c_0})+3)d_2&\text{(by (\ref{twok}))}\\
&\le 2d_2d^2n^d\frac{\|f\|}{2dn^d\|f\|}(c_0(1+L^{c_0})+3)\\
&\le 2d_2d^2n^d\frac{\|f\|}{f^*}(c_0(1+L^{c_0})+3)
&\text{(by Lemma \ref{corest})}\\
&\le 2d_2nL(c_0(1+(nL)^{c_0}+3))&\text{(by (\ref{defal}))}
\end{align*}
and therefore
\begin{equation}\label{expdhbound}
c_4\exp(c_4d_h)\le c_6\exp((nL)^{c_6})
\end{equation}
for the constant $c_6$ chosen in (\ref{choicec6}). We now get
\begin{align*}
k_h&\le c_4\exp(c_4d_h)\left(\frac{\|h\|}{h^*}\right)^{c_3}
&\text{(by (\ref{khbound}))}\\
&\le c_6\exp((nL)^{c_6})(c_5\exp(L^{c_5}))^{c_3}
&\text{(by (\ref{expdhbound}) and (\ref{hhbound}))}\\
&=c_5^{c_3}c_6\exp(c_3(nL)^{c_5}+(nL)^{c_6})\\
&\le c_7\exp((nL)^{c_7})
&(\text{by choice of $c_7$ in (\ref{choicec7})).}
\end{align*}
Combining this with (\ref{hbound}) and (\ref{solid}), i.e.,
$$h\in T(p,c_7\exp((nL)^{c_7}))\qquad\text{and}\qquad
  p\in M(\bar g,d_1),$$
yields (by composing corresponding representations)
$$h\in M(\bar g,c\exp((nL)^c))$$
according to the choice of $c$ in (\ref{choicec}).
Finally, we have that
$$f=h+\la\sum_{i=1}^m(g_i-1)^{2k}g_i\in M(\bar g,c\exp((nL)^c))$$
since
$$\deg((g_i-1)^{2k}g_i)\le d_h\le k_h\le c_7\exp((nL)^{c_7})\le
c\exp((nL)^c)$$
by choice
of $d_2$ in (\ref{d2def}), $d_h$ in (\ref{dhbound}), $k_h$ in (\ref{hbound})
and c in (\ref{choicec}).
\end{proof}

\section*{Acknowledgments}

The authors would like to thank James Demmel, Vicki Powers,
Mihai Putinar and Bernd Sturmfels for the their fruitful suggestions
helping to improve this paper.


\begin{thebibliography}{KKW}

\bibitem[BCR]{bcr} J. Bochnak, M. Coste, M.-F. Roy:
Real algebraic geometry,
Ergebnisse der Mathematik und ihrer Grenzgebiete {\bf 36}, Berlin:
Springer (1998)

\bibitem[DNP]{dnp} J. Demmel, J. Nie and V. Powers:
Representations of positive polynomials on non-compact semialgebraic
sets via
KKT ideals, to appear in J. Pure Appl. Algebra\\
\url{http://math.berkeley.edu/~njw/}

\bibitem[HL]{hl} D. Henrion and J. Lasserre:
GloptiPoly: Global Optimization over Polynomials with Matlab and SeDuMi\\
\url{http://www.laas.fr/~henrion/software/gloptipoly/}

\bibitem[JL]{jl} D. Jibetean and M. Laurent:
Semidefinite approximations for global unconstrained polynomial optimization,
SIAM J. Optim. {\bf 16}, No. 2, 490--514 (2005)

\bibitem[JP]{jp} T. Jacobi, A. Prestel:
Distinguished representations of strictly positive polynomials,
J. Reine Angew. Math. {\bf 532}, 223--235 (2001)

\bibitem[Las]{las} J. Lasserre:
Global optimization with polynomials and the problem of moments,
SIAM J. Optim. {\bf 11}, No. 3, 796--817 (2001)

\bibitem[L\"of]{löf} J. L\"ofberg:
YALMIP: A MATLAB toolbox for rapid prototyping of optimization problems\\
\url{http://control.ee.ethz.ch/~joloef/yalmip.php}

\bibitem[Mr1]{mr1} M. Marshall:
Optimization of polynomial functions,
Can. Math. Bull. {\bf 46}, No. 4, 575--587 (2003)

\bibitem[Mr2]{mr2} M. Marshall:
Representation of non-negative polynomials with finitely many zeros,
to appear in Annales de la Facult\'e des Sciences de Toulouse\\
\url{http://math.usask.ca/~marshall/}


\bibitem[NDS]{nds} J. Nie, J. Demmel, and B. Sturmfels:
Minimizing polynomials via sum of squares over the gradient ideal,
Math. Program. {\bf 106}, No. 3 (A), 587--606 (2006)

\bibitem[PD]{pd} A. Prestel, C. Delzell:
Positive polynomials,
Springer Monographs in Mathematics, Berlin: Springer (2001)

\bibitem[P\'ol]{pol} G. P\'olya:
\"Uber positive Darstellung von Polynomen,
Vierteljahresschrift der Naturforschenden Gesellschaft in Z\"urich {\bf 73}
(1928), 141--145, reprinted in: Collected Papers, Volume 2, 309--313,
Cambridge: MIT Press (1974)

\bibitem[PR]{pr} V. Powers, B. Reznick:
A new bound for P\'olya's Theorem with applications to polynomials
positive on polyhedra,
J. Pure Appl. Algebra {\bf 164}, No. 1--2, 221--229 (2001)

\bibitem[Pre]{pre} A. Prestel:
Bounds for representations of polynomials positive on compact semi-algebraic
sets,
Fields Inst. Commun. {\bf 32}, 253--260 (2002)

\bibitem[PS]{ps} P. Parrilo, B. Sturmfels:
Minimizing polynomial functions,
DIMACS Series in Discrete Mathematics and Theoretical Computer Science
{\bf 60}, 83--100 (2003)

\bibitem[Put]{put} M. Putinar:
Positive polynomials on compact semi-algebraic sets,
Indiana Univ. Math. J. {\bf 42}, No. 3, 969--984 (1993)

\bibitem[Sch]{sch} C. Scheiderer:
Distinguished representations of non-negative polynomials,
J. Algebra {\bf 289}, No. 2, 558--573 (2005)

\bibitem[Smn]{smn} K. Schm\"udgen:
The $K$-moment problem for compact semi-algebraic sets,
Math. Ann. {\bf 289}, No. 2, 203--206 (1991)

\bibitem[Sw1]{sw1} M. Schweighofer:
An algorithmic approach to Schm\"udgen's Positivstellensatz,
J. Pure Appl. Algebra {\bf 166}, No. 3, 307--319 (2002)

\bibitem[Sw2]{sw2} M. Schweighofer:
On the complexity of Schm\"udgen's Positivstellensatz,
Journal of Complexity {\bf 20}, 529-543 (2004)

\bibitem[Sw3]{sw3} M. Schweighofer:
Optimization of polynomials on compact semialgebraic sets,
SIAM Journal on Optimization {\bf 15}, No. 3, 805--825 (2005)

\bibitem[Sw4]{sw4} M. Schweighofer:
Certificates for nonnegativity of polynomials with zeros on compact
semialgebraic sets,
Manuscripta Mathematica {\bf 117}, No. 4, 407 - 428 (2005)

\bibitem[SoS]{sos} S. Prajna, A. Papachristodoulou, P. Seiler, P. Parrilo:
SOSTOOLS: Sum of Squares Optimization Toolbox for MATLAB\\
\url{http://www.cds.caltech.edu/sostools/}

\bibitem[Ste]{ste} G. Stengle:
Complexity estimates for the Schm\"udgen Positivstellensatz,
J. Complexity {\bf 12}, No. 2, 167--174 (1996)

\bibitem[Tod]{tod} M. Todd: Semidefinite Optimization,
Acta Numerica {\bf 10}, 515-560 (2001)

\bibitem[KKW]{kkw} M. Kojima, S. Kim, H. Waki:
Sparsity in sums of squares of polynomials,
Math. Program. {\bf 103}, No. 1 (A), 45--62 (2005)\\
\url{http://www.is.titech.ac.jp/~kojima/SparsePOP/}

\end{thebibliography}
\end{document}